\documentclass[12pt]{amsart}
\usepackage[mathscr]{eucal}
\usepackage[margin=1.25in]{geometry}
\usepackage{amsfonts,amsmath,amssymb,amscd}
\usepackage{xcolor}
\usepackage{datetime}
\usepackage{comment}
\usepackage[colorlinks,citecolor=blue]{hyperref}

\newtheorem{theorem}{Theorem}[section]
\newtheorem{proposition}[theorem]{Proposition}
\newtheorem{lemma}[theorem]{Lemma}
\newtheorem{corollary}[theorem]{Corollary}
\theoremstyle{definition}

\newtheorem{example}[theorem]{Example}

\newtheorem{remark}[theorem]{Remark}

\def \D{\mathbb{D}}
\def \C{\mathbb{C}}
\def \T{\mathbb{T}}

\def \N{\mathbb{N}}
\newcommand{\clb}{\mathcal{B}}

\newcommand{\clk}{\mathcal{K}}

\newcommand{\clf}{\mathcal{F}}

\newcommand{\na}{\mathcal{N}}

\newcommand{\an}{\mathcal{AN}}
\newcommand{\am}{\mathcal{AM}}
\newcommand{\es}{\sigma_{\mathrm{ess}}}
\newcommand{\vp}{\varphi}

\newcommand{\Fog}{\mathscr{F}}

\numberwithin{equation}{section}

\subjclass[2020]{47A08, 47A10, 47A30, 47A53, 47B35}

\keywords{Foguel-type operators; block operator matrices; absolutely norm (minimum) attaining operators; reduced minimum
modulus; Fredholm operators; Toeplitz and Hankel operators, Moore-Penrose inverse.}

\author{Puspendu Nag}
\address{Department of Mathematics, IIT Hyderabad, 
Kandi, Sangareddy, Telangana, India, 502\,284.}
\email{ma23resch11003@iith.ac.in; puspomath@gmail.com}

\author{Ramesh Golla}
\address{Department of Mathematics, IIT Hyderabad, 
Kandi, Sangareddy, Telangana, India, 502\,284.}
\email{rameshg@math.iith.ac.in}

\title[Norm and Minimum Attainment of Foguel-Type Operators]{Norm and Minimum Attainment of Foguel-Type Operators}

\begin{document}

\begin{abstract}
In this article, we study Foguel-type operators
$\EuScript{F}=\begin{bmatrix} V_1^{*} & T\\ 0 & V_2\end{bmatrix}$ on
$H_1\oplus H_2$, where each $H_j$ is a separable Hilbert space, $V_j\in \mathcal{B}(H_j)$ is an isometry for $j=1,2$, and $T\in \mathcal{B}(H_2,H_1)$.
We examine how the norm and minimum attainment properties of $\EuScript{F}$
depend on the corner entry $T$ and on the defect spaces of the isometries.
The norm formula
$\|\EuScript{F}\|=\frac{\|T\|+\sqrt{\|T\|^{2}+4}}{2}$ holds for every $T$
when $V_1,V_2$ are isometries, and more generally when $V_2$ is a contraction
that is isometric on the range of $T^{*}$. Moreover, $\EuScript{F}$ attains its
norm if and only if $T$ does. We characterize the positive $2\times 2$ block
operator matrices that are absolutely norm attaining and absolutely minimum
attaining, and use this to prove that $\EuScript{F}$ is absolutely norm
attaining if and only if it is absolutely minimum attaining if and only if
$\dim N(V_1^{*})<\infty$ and $T$ has finite rank. The closure of the absolutely
norm attaining class is described by $\dim N(V_1^{*})<\infty$ and compactness
of $T$. Further, we establish that the reduced minimum modulus is
$\gamma(\EuScript{F})=\frac{\sqrt{\|T\|^{2}+4}-\|T\|}{2}=\frac{1}{\|\EuScript{F}\|}$,
and $\EuScript{F}$ attains it if and only if $T$ is norm attaining. We also
show that $\EuScript{F}$ is Fredholm if and only if $\dim N(V_j^{*})<\infty$ for
$j=1,2$, with index $\dim N(V_1^{*})-\dim N(V_2^{*})$, and we give its
Moore--Penrose inverse. Finally, for $V_1=V_2=T_z$ and more generally
$V_j=T_{\theta_j}$ with $\theta_j$ inner, and Toeplitz or Hankel corner entries,
the results become explicit conditions on the symbol.
\end{abstract}
\maketitle
\section{Introduction}

Attainment questions for bounded operators have a long history in operator
theory. Let $H_1,H_2$ be Hilbert spaces and $T\in \clb(H_1,H_2)$. The operator $T$ is called \emph{norm attaining}, denoted by $T\in\mathcal N(H_1,H_2)$, if there is
a unit vector $x\in H_1$ with $\|Tx\|=\|T\|$. Similarly, $T$ is called
\emph{minimum attaining} if there is a unit vector $x\in H_1$ with
$\|Tx\|=m(T)$, where $m(T)=\inf_{\|x\|=1}\|Tx\|$ is the minimum modulus \cite{GindlerTaylor}. The operator $T$ is said to be \emph{reduced minimum attaining} if there is a unit vector
$x\in\ N(T)^{\perp}$ with $\|Tx\|=\gamma(T)$, where
$$
\gamma(T)=\inf\{\|Tx\| : x\in N(T)^{\perp},\ \|x\|=1\}
$$
is the reduced minimum modulus \cite{Apostol}. These properties were studied in
\cite{Carvajal1,Carvajal2,SHK-GR2020}. The questions become considerably more delicate
when one requires the same property to hold for the restriction of $T$ to every nonzero
closed subspace of $H_1$. This leads to the classes of absolutely norm attaining ($\mathcal{AN}$) and absolutely minimum attaining ($\mathcal{AM}$)
operators.

The notions of absolute norm attainment and absolute minimum attainment were introduced by Carvajal and Neves in
\cite{Carvajal1} and \cite{Carvajal2}, respectively. The structure and characterization of positive $\an$-operators are studied in \cite{Paulsen,VR}. The $\mathcal{AN}$ property has also been investigated within the classes of paranormal and hyponormal operators in \cite{GR2018} and \cite{NR2021}, respectively.

Every $\mathcal{AN}$-operator is norm attaining,
and $T\in\mathcal{AN}(H)$ if and only if $T^*T\in\mathcal{AN}(H)_+$
\cite{VR}. Such an operator is a compact perturbation of a scalar multiple of
a partial isometry \cite{VR}. The class $\mathcal{AM}$ has a similar structure but behaves differently. Every $\mathcal{AM}$-operator has closed
range, and the $\mathcal{AM}$-class contains all finite rank operators but excludes
compact operators with infinite dimensional range \cite{Carvajal2}. The
operator norm closures of $\mathcal{AN}$ and $\mathcal{AM}$ classes coincide, and
every operator in this common closure is a compact perturbation of a partial isometry with finite dimensional kernel \cite{AN Closure}.

These classes contain many familiar operators. In particular, compact operators are $\mathcal{AN}$, finite rank operators are
$\mathcal{AM}$, and an orthogonal projection is $\mathcal{AN}$ (equivalently
$\mathcal{AM}$) if and only if its range or kernel is finite dimensional
\cite{Carvajal1,Carvajal2}. Toeplitz and Hankel operators in these classes
have been described in terms of their symbols \cite{AN Toeplitz}. An
idempotent is $\mathcal{AN}$ if and only if it is $\mathcal{AM}$ if and only
if its range or kernel is finite dimensional \cite{AN:Matrix2026}.

These classes are also connected with the invariant subspace problem. The authors in \cite{NR} proved that every non-scalar operator $T$ with
$\sigma_{ess}(T^*T)=\{\alpha\}$ for which $\{\alpha-\lambda:\lambda\in
\sigma_d(T^*T)\}$ is $p$-summable for some $1\le p<\infty$ has a non-trivial
hyperinvariant subspace, and the same holds for normaloid essentially
isometric operators under a summability condition. As consequences,
normaloid $\mathcal{AN}$-operators and minimaloid $\mathcal{AM}$-operators
have non-trivial hyperinvariant subspaces. Moreover, operators of the form $S+K$ with $S$ an isometry and $K$ in the Schatten $p$-class, and operators of the form $\beta V+K$ with $V$ a
partial isometry with finite dimensional kernel and $K$ in the Schatten
$p$-class have non-trivial hyperinvariant subspaces. The same conclusion holds for $\mathcal{AN}$ Toeplitz
operators \cite{AN Toeplitz} and for generalized quadratic operators belonging to the classes $\mathcal{AN}$, $\mathcal{AM}$ or their operator norm closures \cite{AN:Matrix2026}.

For block operator matrices the situation is more delicate: a $2\times2$ block
operator can fail to be $\mathcal{AN}$ or $\mathcal{AM}$ even when its entries
have the corresponding property locally. In particular, for positive $2\times2$ block
matrices the $\mathcal{AN}$ and $\mathcal{AM}$ properties can be described through the diagonal
entries, their essential spectra and a factorization of the off-diagonal entry \cite{AN:Matrix2026}.

A particularly rich family of block operators comes from the \emph{Foguel
operator}
$$
\begin{bmatrix} S^{*} & T \\
0 & S
\end{bmatrix}\quad\text{on } H\oplus H,
$$
where $S$ is the unilateral shift and $T$ is a suitably chosen operator.
Foguel \cite{Foguel} used an operator of this form to give a counterexample to
a problem of Sz.-Nagy: it is power bounded but not similar to a contraction. Halmos \cite{Halmos} then gave a simpler version of
this construction, working directly with block matrices of the above form. Peller \cite{Peller82} analysed the variants in which the corner entry is a Hankel operator in his work on power bounded operators and functional
calculus; see also \cite{Peller}. Pisier's polynomially bounded operator that
is not similar to a contraction \cite{Pisier} is built from operators of the
same type. While the works above concentrate on similarity and growth of
powers, the norm and the modulus $|F|=(F^{*}F)^{\frac{1}{2}}$ of a Foguel operator
$F=\begin{bmatrix} S^{*} & T\\ 0 & S\end{bmatrix}$ were studied by Garcia
\cite{Garcia2009} in 2009, in terms of the corner entry $T$; see also
Raghupathi \cite{Raghupathi2012}, who gave simpler proofs of the norm formula
for $F=\begin{bmatrix} V^{*} & T\\ 0 & V\end{bmatrix}$ on $H\oplus H$, with $V$ an arbitrary
isometry. This suggests that the extremal behaviour of $F$ should be read off
from $T$, and it is this point of view that we pursue. We ask when the norm and
the minimum of such operators are attained, and how this depends on the corner
entry.

In this paper we study the following generalization. Let $H_1,H_2$ be Hilbert
spaces, $V_j\in\clb(H_j)$ isometries, $T\in\clb(H_2,H_1)$, and consider the Foguel-type operator
$$
\Fog=\begin{bmatrix} V_1^{*} & T\\ 0 & V_2\end{bmatrix} \text{ on } H_1\oplus H_2.
$$
The case $V_1=V_2=S$ gives the
classical Foguel operators above. The matrix combines two very different operators, the isometric part, whose defect spaces $N(V_j^{*})$ measure its failure to be unitary, and the arbitrary entry $T$. Our goal is to understand how they determine the extremal  behaviour of $\Fog$. We first show
that
$$
\|\Fog\|=\frac{\|T\|+\sqrt{\|T\|^{2}+4}}{2}
$$
for every $T$ when $V_1,V_2$ are isometries, and more generally when $V_2$ is a
contraction that is isometric on $R(T^{*})$. For $V_1=V_2$ this formula is
contained in \cite{Garcia2009,Raghupathi2012}; here $V_1,V_2$ may act on
different spaces and have different defect spaces. Consequently, $\Fog$
attains its norm if and only if $T$ does. Thus despite
the block structure, norm attainment is decided entirely by the off-diagonal
entry. For the absolute versions we characterize the positive $2\times2$ block
operator matrices in $\mathcal{AN}$-class, which answers a question left open in \cite{AN:Matrix2026}, and
in $\mathcal{AM}$-class. We use this to prove that $\Fog$ is $\mathcal{AN}$ if and only if $\Fog$ is $\mathcal{AM}$ if and only if $\dim N(V_1^{*})<\infty$ and $T$
has finite rank. Replacing finite rank by compactness of $T$ gives the
corresponding description for the closure of the $\mathcal{AN}$-class.

Since $N(\Fog)=N(V_1^{*})\oplus\{0\}$, the minimum modulus $m(\Fog)$ vanishes as soon as $V_1$ is non-unitary, and $\Fog$ is then trivially minimum
attaining. The reduced minimum modulus, however, carries the real information. We prove that 
$$
\gamma(\Fog)=\frac{\sqrt{\|T\|^{2}+4}-\|T\|}{2}=\frac1{\|\Fog\|},
$$
and $\Fog$ attains $\gamma(\Fog)$ if and only if $T$ is norm attaining. In this way
the lower extremal behaviour of $\Fog$ is tied to the upper extremal behaviour of $T$. Further, $\Fog$ always has closed range and is Fredholm precisely when $\dim N(V_j^{*})<\infty$ for $j=1,2$, with
$\mathrm{ind}(\Fog)=\dim N(V_1^{*})-\dim N(V_2^{*})$. We also show that the Moore--Penrose inverse of $\Fog$ is
$$
\Fog^{\dagger}=\begin{bmatrix} V_1 & -V_1TV_2^{*}\\
0 & V_2^{*}
\end{bmatrix},
$$
which is norm attaining exactly when $\Fog$ is reduced minimum attaining, that is,
exactly when $T$ is norm attaining.

We believe these results are of interest for several reasons. First, a question
about a $2\times2$ operator matrix is reduced to a question about the single
operator $T$, while the isometries affect only the kernel, the range and the
index. Second, the condition $\dim N(V_1^{*})<\infty$ with $T$ of
finite rank gives a concrete family of $\mathcal{AN}$ and $\mathcal{AM}$
operators, including non-normal ones. Third, our examples show that the
hypotheses cannot be dropped. A Foguel-type operator $\Fog$ may be Fredholm, norm
attaining and reduced minimum attaining without being $\mathcal{AN}$ or
$\mathcal{AM}$. It may also fail to attain $\gamma(\Fog)$ even when $T$ attains
its minimum modulus. Allowing two different isometries on different spaces also
shows which properties depend only on $V_1$ and which involve $V_2$ as well.
The $\mathcal{AN}$ and $\mathcal{AM}$ conditions and the kernel depend only on
$V_1$, whereas Fredholmness and the index depend on both isometries. This
distinction cannot be seen when $V_1=V_2$. Finally, as the Foguel-type operator
originated in Hardy space theory, our theorems return there. For
$V_1=V_2=T_z$ and more generally $V_j=T_{\theta_j}$ with $\theta_j$ inner, and
Toeplitz or Hankel corner entries, they become explicit conditions on the symbol.

\medskip
The paper is organized as follows. In Section 2, we fix notations and recall the necessary background. In Section 3, we prove the norm formula and show that $\Fog$ attains its norm if and only if $T$ does. We also characterize absolutely norm attaining Foguel-type operators by first establishing a complete characterization
of positive $2\times 2$ block operator matrices belonging to
the $\mathcal{AN}$-class. Section 4 treats the reduced minimum modulus and the absolutely minimum attaining case, including the corresponding characterization of positive $2\times 2$ block operator matrices in the $\mathcal{AM}$-class. Finally, in Section 5, we discuss the Fredholm property, the index and the Moore--Penrose inverse, and apply the results to Toeplitz and Hankel corner entries on the Hardy space.
\section{Preliminaries}

Throughout, all Hilbert spaces are complex, infinite dimensional and separable,
and are denoted by $H,H_1,H_2$. We write $\clb(H_1,H_2)$ for the bounded linear
operators from $H_1$ into $H_2$, and $\clb(H):=\clb(H,H)$ when $H_1=H_2=H$. For
$T\in \clb(H_1,H_2)$, we denote its range and null space by $R(T)$ and
$N(T)$, and its adjoint by $T^{*}\in \clb(H_2,H_1)$. The unit sphere of $H$
is denoted by $S_H=\{x\in H:\|x\|=1\}$. We write $\mathcal F(H_1,H_2)$ and
$\mathcal K(H_1,H_2)$ for the finite rank and the compact operators, with
$\mathcal F(H):=\mathcal F(H,H)$ and $\mathcal K(H):=\mathcal K(H,H)$. The class of self-adjoint and the positive operators on $H$ are denoted by $\clb(H)_{sa}$ and $\clb(H)_{+}$, respectively. 
For $T\in \clb(H_1,H_2)$ the modulus is $|T|=(T^{*}T)^{\frac{1}{2}}$, the unique positive square root of the positive operator $T^*T$.

\smallskip
An operator $T\in \clb(H_1,H_2)$ is said to be \emph{norm attaining} if $\|Tx_0\|=\|T\|$ for
some $x_0\in S_{H_1}$, and the class of such operators is denoted by
$\mathcal N(H_1,H_2)$, with $\mathcal N(H):=\mathcal N(H,H)$. The
\emph{minimum modulus} of $T$ is defined by $m(T)=\inf\{\|Tx\|:x\in S_{H_1}\}$, and $T$ is
\emph{minimum attaining} if $\|Ty_0\|=m(T)$ for some $y_0\in S_{H_1}$. Since
$m(T)=0$ whenever $N(T)\neq\{0\}$, it is natural to look at $T$ on $N(T)^{\perp}$. The \emph{reduced minimum modulus} is defined by
$$
\gamma(T)=\inf\{\|Tx\|:x\in N(T)^{\perp},\ \|x\|=1\},
$$
and $T$ is \emph{reduced minimum attaining} if $\|Tz\|=\gamma(T)$ for some unit
vector $z\in N(T)^{\perp}$. We have $m(T)\le\gamma(T)$, with equality if
$T$ is injective, and $\gamma(T)>0$ if and only if $R(T)$ is closed. We refer to \cite{Apostol} for further details.

\smallskip
An operator $T\in \clb(H_1,H_2)$ is called \emph{absolutely norm attaining}
($\mathcal{AN}$) if the restriction $T|_M : M\to H_2$ is norm attaining for
every nonzero closed subspace $M$ of $H_1$. The collection of all such operators is denoted by $\an(H_1,H_2)$. We refer to \cite{Carvajal1,Paulsen,VR} for further properties and characterizations of this class.

On the other hand, $T\in\clb(H_1,H_2)$ is called \emph{absolutely minimum
attaining} ($\mathcal{AM}$) if $T|_M : M\to H_2$ is minimum attaining for every nonzero closed subspace $M$ of $H_1$. We denote the collection of all such operators by $\am(H_1,H_2)$. We refer to \cite{Carvajal2,AM Cha} for further details on this class.

When $H_1=H_2=H$, we simply write $\mathcal{AN}(H)$ and
$\mathcal{AM}(H)$, respectively. The corresponding subclasses of
positive operators are denoted by $\mathcal{AN}(H)_+$ and
$\mathcal{AM}(H)_+$, respectively.

\smallskip
The direct sum $H_1\oplus H_2$ is a Hilbert space with the inner product
$\langle(u_1,u_2),(v_1,v_2)\rangle=\langle u_1,v_1\rangle_{H_1}+\langle u_2,v_2\rangle_{H_2}$, where $u_j,v_j\in H_j$ for $j=1,2$.
Every $T\in \clb(H_1\oplus H_2)$ has a block representation
$$
T=\begin{bmatrix}
A_{11} & A_{12}\\
A_{21} & A_{22}
\end{bmatrix} \quad \text{with }
A_{ij}=P_iT|_{H_j}\in \clb(H_j,H_i),
$$
where $P_i$ is the orthogonal projection of $H_1\oplus H_2$ onto $H_i$. The
adjoint and products of block operators are computed by the usual matrix rules,
and we write $A_1\oplus A_2$ for $\begin{bmatrix}A_1&0\\0&A_2\end{bmatrix}$. We shall use the following two positivity criteria for
$T=\begin{bmatrix}A_{11}&A_{12}\\A_{12}^{*}&A_{22}\end{bmatrix}$.
First, $T\ge0$ if and only if $A_{11},A_{22}\ge0$ and
$A_{12}=A_{11}^{\frac{1}{2}}CA_{22}^{\frac{1}{2}}$ for some contraction $C$. Second, if
$A_{22}\ge\varepsilon I$ for some $\varepsilon>0$, then $T\ge0$ if and only if
the Schur complement $A_{11}-A_{12}A_{22}^{-1}A_{12}^{*}$ is positive, and the
same holds with the roles of $A_{11}$ and $A_{22}$ exchanged \cite{Osaka}.

Let $A\in \clb(H)$ be a self-adjoint operator. Then there exists a unique pair of positive operators $A^{+}$ and $A^{-}$ with $A^{+}=\frac{|A|+A}{2}$ and $A^{-}=\frac{|A|-A}{2}$ such that $A=A^{+}-A^{-}$. Moreover, $A^{+}A^{-}=A^{-}A^{+}=0$. We refer to \cite[Proposition 4.4.9, Page 158]{Pedersen} for further details.

Let $\alpha\geq 0$ and $(A_j-\alpha I_{H_j})\in \clb(H_j)_{sa}$. We  denote $A_j^{\alpha+}:= (A_j-\alpha I_{H_j})^+$ and $A_j^{\alpha-}:= (A_j-\alpha I_{H_j})^-$. 

\smallskip
For $T\in \clb(H)$, the spectrum is defined by
$\sigma(T)=\{\lambda\in\mathbb C: T-\lambda I\ \text{is not invertible}\}$. The operator $T$ is said to be \emph{Fredholm} if $R(T)$ is closed and both $N(T)$, $N(T^{*})$ are finite dimensional, and its
\emph{index} is given by $\mathrm{ind}(T)=\dim N(T)-\dim N(T^{*})$.
The \emph{essential spectrum} of $T$ is defined by
$$
\sigma_{\mathrm{ess}}(T)=\{\mu\in\mathbb C: T-\mu I\ \text{is not Fredholm}\}.
$$
By Weyl's theorem \cite[Chapter XI, Page 191]{GGK1}, $\sigma_{\mathrm{ess}}(T+K)=\sigma_{\mathrm{ess}}(T)$ for every
$T\in \clb(H)$ and $K\in\mathcal K(H)$.

\smallskip
An operator $V\in \clb(H)$ is called an \emph{isometry} if $V^{*}V=I$. Then $V$ is injective, and has a closed range. Also note that $VV^{*}$ is the orthogonal projection onto $R(V)$, and $I-VV^{*}=P_{N(V^{*})}$, the orthogonal
projection onto $N(V^{*})=R(V)^{\perp}$. The space $N(V^{*})$ is called the \emph{defect space} of $V$, and $V$ is unitary if and only if $N(V^{*})=\{0\}$. The unilateral shift $S$ on $\ell^2(\mathbb N)$, defined by $Se_n=e_{n+1}$ for $n\in\N$, is an isometry with
$N(S^{*})=\C e_1$. The bilateral shift $W$ on $\ell^2(\mathbb{Z})$, defined by $We_n=e_{n+1}$ for $n\in\mathbb{Z}$, is unitary.

Let $T\in \clb(H_1,H_2)$ have a closed range. The \emph{Moore--Penrose inverse} of $T$ is the unique operator $T^{\dagger}\in \clb(H_2,H_1)$ satisfying the four Penrose equations
$$
TT^{\dagger}T=T,\quad T^{\dagger}TT^{\dagger}=T^{\dagger},\quad (TT^{\dagger})^{*}=TT^{\dagger},\quad (T^{\dagger}T)^{*}=T^{\dagger}T.
$$
Also note that $T^{\dagger}$ satisfies the following properties
$$
N(T^{\dagger})=R(T)^{\perp},\quad R(T^{\dagger})=N(T)^{\perp},\quad TT^{\dagger}=P_{R(T)},\quad T^{\dagger}T=P_{N(T)^{\perp}},
$$
where $P_{M}$ denotes the orthogonal projection onto a closed subspace $M$. In particular, for $y\in H_2$, $T^{\dagger}y$ is the unique vector $z\in N(T)^{\perp}$ such that $Tz=P_{R(T)}y$.

If $T$ is invertible, then $T^{\dagger}=T^{-1}$. If $T\neq 0$, then $\gamma(T)=\frac{1}{\|T^{\dagger}\|}$, moreover, $T^{\dagger}$ is norm attaining if and only if $T$ is reduced minimum attaining (see \cite[Theorem 3.2]{SHK-GR2020}). We refer to \cite{Groetsch} for further details on Moore--Penrose inverse.

\section{Norm and Absolutely Norm Attainment}
In this section, we study the norm attainment properties of Foguel-type operators and their relationship with the off-diagonal entry $T$. First we establish a formula for the operator norm under an appropriate condition on the isometries
and prove that norm attainment of the Foguel-type operator is equivalent to norm attainment of $T$. We then establish a complete characterization of positive $2\times 2$ block operator matrices in the $\mathcal{AN}$-class, which enables us to characterize absolutely norm attaining Foguel-type operators.

\begin{proposition}\label{normformula}
Let $T\in \clb(H_2,H_1)$, $V_1\in \clb(H_1)$ be an isometry,
$V_2\in \clb(H_2)$ be a contraction such that $\|V_2x\|=\|x\|$ for all $x\in R(T^*)$. Define the operator $\Fog=\begin{bmatrix}
V_1^* & T\\
0 & V_2
\end{bmatrix}$ on $H_1\oplus H_2$. Then
\begin{equation}\label{neq}
\|\Fog\|=\frac{\|T\|+\sqrt{\|T\|^2+4}}{2}.
\end{equation}
\end{proposition}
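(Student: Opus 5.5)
Write $t=\|T\|$ and $\lambda=\frac{t+\sqrt{t^2+4}}{2}$. This $\lambda$ is the largest root of $\lambda^2-t\lambda-1=0$, and hence also the norm of the scalar matrix $\begin{bmatrix}1&t\\0&1\end{bmatrix}$. The plan is to prove $\|\Fog\|\le\lambda$ by a direct estimate and $\|\Fog\|\ge\lambda$ by testing on approximate maximizing vectors of $T$.

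\emph{Upper bound.} For $(x,y)\in H_1\oplus H_2$ we have $\|\Fog(x,y)\|^2=\|V_1^*x+Ty\|^2+\|V_2y\|^2$. Since $V_1^*$ and $V_2$ are contractions,
$$
\|\Fog(x,y)\|^2\le (\|x\|+t\|y\|)^2+\|y\|^2=\left\|\begin{bmatrix}1&t\\0&1\end{bmatrix}\begin{bmatrix}\|x\|\\ \|y\|\end{bmatrix}\right\|^2 .
$$
This gives $\|\Fog\|\le\left\|\begin{bmatrix}1&t\\0&1\end{bmatrix}\right\|=\lambda$. The last equality is the standard computation: the largest eigenvalue of $\begin{bmatrix}1&t\\t&1+t^2\end{bmatrix}$ is $\lambda^2$. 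The upper bound uses only that $V_1$ and $V_2$ are contractions.

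\emph{Lower bound.} If $t=0$, pick a unit $x$ with $\|V_1^* x\|=1$; for instance $x=V_1u$ with $\|u\|=1$, since $V_1^*V_1=I$. This gives $\|\Fog\|\ge1=\lambda$. Now let $t>0$. Choose unit vectors $y_n\in H_2$ with $\|Ty_n\|\to t$. We may take $y_n\in N(T)^\perp=\overline{R(T^*)}$. By continuity the isometric property of $V_2$ extends from $R(T^*)$ to its closure, so $\|V_2y_n\|=1$. Put $w_n=Ty_n/\|Ty_n\|$ and $x_n=V_1w_n$, so that $V_1^*x_n=w_n$ exactly and $\|x_n\|=1$. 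For scalars $a,b\ge0$ with $a^2+b^2=1$, test $\Fog$ on $(ax_n,by_n)$:
$$
\|\Fog(ax_n,by_n)\|^2=\|(a+b\|Ty_n\|)w_n\|^2+b^2\|V_2y_n\|^2=(a+b\|Ty_n\|)^2+b^2 .
$$
As $n\to\infty$ this tends to $(a+bt)^2+b^2$. Choosing $(a,b)$ to be the unit top singular vector of $\begin{bmatrix}1&t\\0&1\end{bmatrix}$, which has nonnegative entries, makes this limit equal to $\lambda^2$. Hence $\|\Fog\|\ge\lambda$.

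\emph{Where the difficulty lies.} The step I expect to need the most care is ensuring $\|V_2y_n\|=1$ along the maximizing sequence. This is exactly why the hypothesis is that $V_2$ is isometric on $R(T^*)$. One must reduce $y_n$ to $\overline{R(T^*)}$, where replacing $y_n$ by its projection only increases $\|Ty_n\|/\|y_n\|$, and then pass to the closure. The other subtle point is producing $x_n$ with $V_1^*x_n$ exactly aligned with $Ty_n$, which uses $V_1^*V_1=I$.
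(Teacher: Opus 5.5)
Your proof is correct, and it takes a different route from the paper's.

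\textbf{The paper's route.} The paper computes $\Fog\Fog^*$ and applies the Schur complement criterion to $\lambda^2 I-\Fog\Fog^*$ for $\lambda>1$. It evaluates $V_2^*(\lambda^2 I-V_2V_2^*)^{-1}V_2=Q(\lambda^2 I-Q)^{-1}$, with $Q=V_2^*V_2$, by a Neumann series. It then uses $Qx=x$ for $x\in R(T^*)$ to reduce everything to the operator inequality $TT^*\le\frac{(\lambda^2-1)^2}{\lambda^2}I$. The result is the exact equivalence $\|\Fog\|\le\lambda\iff\|T\|\le\frac{\lambda^2-1}{\lambda}$ for every $\lambda>1$. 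This gives both bounds at once, stays inside the block-positivity framework used throughout the paper, and applies the hypothesis only to vectors of the form $T^*y$, so no closure argument is needed.

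\textbf{Your route.} You dominate $\Fog$ pointwise by the scalar matrix $\begin{bmatrix}1&\|T\|\\0&1\end{bmatrix}$ acting on $(\|x\|,\|y\|)$. You get the reverse inequality from the explicit test vectors $(aV_1w_n,\,by_n)$.

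\textbf{What your route buys.} It is more elementary: no positivity criteria, inverses or series are needed, and the formula is explained as the norm of a $2\times 2$ scalar matrix. It also separates the roles of the hypotheses. The upper bound needs only that $V_1^*$ and $V_2$ are contractions. The identity $V_1^*V_1=I$ and the isometric behaviour of $V_2$ on $\overline{R(T^*)}$ enter only the lower bound.

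It also anticipates the sufficiency half of Theorem \ref{NA Foguel}. If $y$ is a unit vector with $\|Ty\|=\|T\|>0$, then $y$ automatically lies in $N(T)^\perp$. Taking $y_n=y$ then gives a vector at which $\Fog$ attains its norm. After rescaling, this vector is exactly the eigenvector $\bigl(\frac{1}{\|\Fog\|^2-1}V_1Ty,\,y\bigr)$ used there.

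The two delicate steps are restricting the maximizing sequence to $N(T)^\perp=\overline{R(T^*)}$ and extending the isometry hypothesis to that closure by continuity. You handle both correctly.
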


\begin{proof}
Let $Q=V_2^*V_2$. Since $V_2$ is a contraction, we have $0\leq Q\leq I_{H_2}$.

Now
$$
\Fog\Fog^*=\begin{bmatrix}
V_1^*V_1+TT^* & TV_2^*\\
V_2T^* & V_2V_2^*
\end{bmatrix}
=\begin{bmatrix}
I_{H_1}+TT^* & TV_2^*\\
V_2T^* & V_2V_2^*
\end{bmatrix}.
$$
If $T=0$, then we get $\|\Fog\|=1$. Assume that $T\neq 0$ and $\lambda>1$. Then
$$
\lambda^2I_{H_1\oplus H_2}-\Fog\Fog^*
=\begin{bmatrix}
(\lambda^2-1)I_{H_1}-TT^* & -TV_2^*\\
-V_2T^* & \lambda^2I_{H_2}-V_2V_2^*
\end{bmatrix}.
$$
Since $\|V_2\|\leq1$ and $\lambda>1$, we have
$\lambda^2I_{H_2}-V_2V_2^*\geq (\lambda^2-1)I_{H_2}>0$.
Therefore, by the Schur complement criterion \cite[Remark 2.4.2, Page 56]{Osaka},
\begin{align}\label{neq1}
\lambda^2I_{H_1\oplus H_2}-\Fog\Fog^*\geq0
\iff\;&
(\lambda^2-1)I_{H_1}-TT^* \notag\\
&\quad
-TV_2^*(\lambda^2I_{H_2}-V_2V_2^*)^{-1}V_2T^*
\geq0.
\end{align}

Since $\left\|\dfrac{V_2V_2^*}{\lambda^2}\right\|
\leq\dfrac{1}{\lambda^2}<1$, the Neumann series gives
$$
(\lambda^2I_{H_2}-V_2V_2^*)^{-1}
=\sum_{j=0}^{\infty}
\frac{(V_2V_2^*)^j}{\lambda^{2(j+1)}}.
$$
Consequently, we get
$$
V_2^*(\lambda^2I_{H_2}-V_2V_2^*)^{-1}V_2=\sum_{j=0}^{\infty}
\frac{V_2^*(V_2V_2^*)^jV_2}{\lambda^{2(j+1)}}=\sum_{j=0}^{\infty}
\frac{Q^{j+1}}{\lambda^{2(j+1)}}=Q(\lambda^2I_{H_2}-Q)^{-1}.
$$
For $y\in H_1$, let $x=T^*y$. Then $x\in R(T^*)$, and hence by the hypothesis, we have $\langle Qx,x\rangle=\|x\|^2$ and as $0\leq Q\leq I_{H_2}$, we get $Qx=x$.
It follows that
$$
(\lambda^2I_{H_2}-Q)x=(\lambda^2-1)x,
$$
and therefore
$$
Q(\lambda^2I_{H_2}-Q)^{-1}x
=\frac{x}{\lambda^2-1}.
$$
Thus for any $y\in H_1$, we obtain
\begin{equation}\label{neq2}
TV_2^*(\lambda^2I_{H_2}-V_2V_2^*)^{-1}V_2T^* y
=\frac{TT^* y}{\lambda^2-1}.
\end{equation}

Substituting \eqref{neq2} into \eqref{neq1}, we obtain
\begin{align*}
\lambda^2I_{H_1\oplus H_2}-\Fog\Fog^*\geq0
&\iff
(\lambda^2-1)I_{H_1}-TT^*
-\frac{TT^*}{\lambda^2-1}\geq0\\
&\iff
(\lambda^2-1)I_{H_1}
-\frac{\lambda^2}{\lambda^2-1}TT^*
\geq0\\
&\iff
TT^*
\leq
\frac{(\lambda^2-1)^2}{\lambda^2}I_{H_1}\\
&\iff
\|T\|
\leq
\frac{\lambda^2-1}{\lambda}.
\end{align*}
Hence
\begin{equation}\label{neq3}
\|\Fog\|\leq \lambda
\iff
\|T\|\leq\frac{\lambda^2-1}{\lambda},
\qquad \lambda>1.
\end{equation}
Thus we obtain $\|\Fog\|=\inf\left\{\lambda>1:
\|T\|\leq\frac{\lambda^2-1}{\lambda}\right\}$.
The inequality in \eqref{neq3} is equivalent to $\lambda^2-\|T\|\lambda-1\geq 0$.
The positive root of the equation $\lambda^2-\|T\|\lambda-1=0$ is
$\lambda_0=\dfrac{\|T\|+\sqrt{\|T\|^2+4}}{2}$. Consequently, it follows that
$$
\|\Fog\|=\lambda_0=\frac{\|T\|+\sqrt{\|T\|^2+4}}{2}.
$$
This completes the proof.
\end{proof}

\begin{remark}
(i) For $V_1=V_2=S$, the unilateral shift on
$\ell^2_E(\mathbb N)$ with $E$ an arbitrary separable complex Hilbert space, the norm
formula \eqref{neq} is due to Garcia \cite{Garcia2009}. For $V_1=V_2=V$ an
arbitrary isometry on a single Hilbert space, the formula was proved by Raghupathi
\cite{Raghupathi2012}. Proposition \ref{normformula} allows $V_1$ and $V_2$ to act on
different spaces, and allows $V_2$ to be merely a contraction that is isometric
on $R(T^{*})$.

(ii) In particular, if $V_j\in \clb(H_j)$ is an isometry for $j=1,2$, then
$\|V_2x\|=\|x\|$ for all $x\in H_2$. Thus the norm formula holds for every
$T\in \clb(H_2,H_1)$.
\end{remark}

We recall the following results, which will be useful in our subsequent analysis of the norm attaining properties of Foguel-type operators (see \cite[Theorem 2.5]{Paulsen}).

\begin{lemma}\label{lem:PP}
Let $T\in\mathcal{B}(\mathcal{H}_1,\mathcal{H}_2)$. Then the following are equivalent:
\begin{enumerate}
\item $T\in\mathcal{N}(H_1,H_2)$;
\item $T^*\in\mathcal{N}(H_2,H_1)$;
\item $T^*T\in\mathcal{N}(H_1)$;
\item $TT^*\in\mathcal{N}(H_2)$;
\item $\|T\|^2$ is an eigenvalue of $T^*T$.
\end{enumerate}
\end{lemma}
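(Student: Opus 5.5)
The plan is to prove the cycle of implications $(1)\Rightarrow(3)\Rightarrow(5)\Rightarrow(1)$, then handle $(2)$ and $(4)$ by applying this cycle to $T^*$. Throughout we assume $T\neq 0$; for $T=0$ every statement holds trivially, with every unit vector attaining the norm and $0$ an eigenvalue of $0$.

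For $(1)\Rightarrow(3)$, take a unit vector $x_0$ with $\|Tx_0\|=\|T\|$. Then
$$\langle T^*Tx_0,x_0\rangle=\|T\|^2=\|T^*T\|.$$
Cauchy--Schwarz gives $\|T^*Tx_0\|\ge \|T\|^2=\|T^*T\|$, so $T^*T$ attains its norm at $x_0$.

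For $(3)\Rightarrow(5)$, let $A=T^*T\ge 0$ and pick a unit vector $x_0$ with $\|Ax_0\|=\|A\|$. The main point is to get an eigenvector from this. Since $\|A\|I-A\ge 0$ and $A\ge 0$ commute, their product is positive, so $A(\|A\|I-A)\ge0$. Hence
$$\langle A(\|A\|I-A)x_0,x_0\rangle=\|A\|\langle Ax_0,x_0\rangle-\|Ax_0\|^2\ge 0.$$
This gives $\langle Ax_0,x_0\rangle\ge \|A\|$, so $\langle Ax_0,x_0\rangle=\|A\|$. Then
$$\|(\|A\|I-A)^{1/2}x_0\|^2=0,$$
so $Ax_0=\|A\|x_0$. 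Thus $\|T\|^2=\|A\|$ is an eigenvalue of $T^*T$.

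For $(5)\Rightarrow(1)$, if $T^*Tx_0=\|T\|^2x_0$ with $\|x_0\|=1$, then $\|Tx_0\|^2=\langle T^*Tx_0,x_0\rangle=\|T\|^2$, so $T$ attains its norm at $x_0$.

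Applying the cycle to $T^*$ in place of $T$ shows that $(2)$, $(4)$, and "$\|T\|^2$ is an eigenvalue of $TT^*$" are equivalent. It remains to connect the two chains, and we do this through $(5)$.
- If $T^*Tx_0=\|T\|^2x_0$ with $x_0\neq 0$, put $y_0=Tx_0$. Then $y_0\ne0$ because $\|y_0\|^2=\|T\|^2\|x_0\|^2>0$, and $TT^*y_0=T(T^*Tx_0)=\|T\|^2y_0$.
- The same argument in reverse gives the converse.

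This closes all equivalences. The only nonroutine step is the eigenvector extraction in $(3)\Rightarrow(5)$; everything else is a direct computation.
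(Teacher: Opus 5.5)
Your proof is correct. It differs from the paper only in that the paper gives no argument at all: the lemma is imported from \cite[Theorem 2.5]{Paulsen} and used as a black box.

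Your self-contained cycle holds up at each step:
\begin{itemize}
\item The Cauchy--Schwarz sandwich gives $(1)\Rightarrow(3)$.
\item In $(3)\Rightarrow(5)$, positivity of $A(\|A\|I-A)$, i.e.\ $A^2\le\|A\|A$, forces $\langle Ax_0,x_0\rangle=\|A\|$ and hence $(\|A\|I-A)x_0=0$. Dividing by $\|A\|$ is legitimate because you set $T=0$ aside.
\item The maps $x_0\mapsto Tx_0$ and $y_0\mapsto T^*y_0$ correctly transfer the eigenvalue $\|T\|^2$ between $T^*T$ and $TT^*$.
\end{itemize}

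The citation buys brevity. Your version buys transparency: it isolates the one non-formal fact, namely that a positive operator $A$ attains its norm only at eigenvectors for the eigenvalue $\|A\|$. It also shows that the equivalence $(1)\Leftrightarrow(5)$, which is what the proof of Theorem~\ref{NA Foguel} actually relies on, is completely elementary. Besides your one-line $(5)\Rightarrow(1)$, the implication $(1)\Rightarrow(5)$ follows directly from the equality case in $\|T\|^2=\langle T^*Tx_0,x_0\rangle\le\|T^*Tx_0\|\le\|T\|^2$, without passing through $(3)$.
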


\begin{theorem}\label{NA Foguel}
Let $V_j\in\clb(H_j)$ be an isometry for $j=1,2$, and $T\in\clb(H_2,H_1)$. Consider the Foguel-type operator
$\Fog=\begin{bmatrix}
V_1^* & T\\
0 & V_2
\end{bmatrix}\in\clb(H_1\oplus H_2)$. Then $\Fog\in\na(H_1\oplus H_2)$ if and only if $T\in\na(H_2,H_1)$.
\end{theorem}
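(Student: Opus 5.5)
By Lemma \ref{lem:PP}, $\Fog\in\na$ iff $\Fog^*\in\na$ iff $\lambda_0^2$ is an eigenvalue of $\Fog\Fog^*$, where $\lambda_0=\|\Fog\|=\frac{\|T\|+\sqrt{\|T\|^2+4}}{2}$ by Proposition \ref{normformula}. Similarly, $T\in\na$ iff $\|T\|^2$ is an eigenvalue of $TT^*$. So the plan is to solve $\Fog\Fog^*(y,x)=\lambda_0^2(y,x)$ explicitly, using
$$\Fog\Fog^*=\begin{bmatrix} I+TT^* & TV_2^*\\ V_2T^* & V_2V_2^*\end{bmatrix}.$$
The case $T=0$ is trivial: then $\|\Fog\|=1$, and $\Fog$ attains its norm on any vector $(0,x)$ because $V_2$ is an isometry. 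So assume $T\neq0$, which gives $\lambda_0>1$ and $\lambda_0^2-1=\lambda_0\|T\|$.

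\emph{($\Leftarrow$).} Suppose $TT^*y=\|T\|^2y$ with $y\neq0$. Try $x=cV_2T^*y$. The second row reads $V_2T^*y+cV_2V_2^*V_2T^*y=(1+c)V_2T^*y$, and we need this to equal $\lambda_0^2 cV_2T^*y$; this gives $c=1/(\lambda_0^2-1)$. The first row reads $(1+\|T\|^2)y+cTT^*y=(1+\|T\|^2+c\|T\|^2)y$, and we need this to equal $\lambda_0^2y$. Using $\lambda_0^2-1=\lambda_0\|T\|$, this reduces to $\|T\|^2(1+c)=\lambda_0\|T\|$. Since $1+c=\lambda_0^2/(\lambda_0^2-1)=\lambda_0/\|T\|$, the identity holds. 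Hence $\lambda_0^2$ is an eigenvalue of $\Fog\Fog^*$, with eigenvector $(y,cV_2T^*y)\ne0$.

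\emph{($\Rightarrow$).} Suppose $(y,x)\neq0$ is an eigenvector of $\Fog\Fog^*$ for the eigenvalue $\lambda_0^2$. Write $P=V_2V_2^*$, which is a projection. The second row gives $(\lambda_0^2-P)x=V_2T^*y$. Since $V_2T^*y\in R(V_2)$ and $\lambda_0^2-P$ acts as $\lambda_0^2-1$ on $R(V_2)$ and as $\lambda_0^2$ on $R(V_2)^\perp$, we get $x=V_2T^*y/(\lambda_0^2-1)$. Substituting this into the first row gives
$$\Big(1+\tfrac{1}{\lambda_0^2-1}\Big)TT^*y=(\lambda_0^2-1)y,$$
that is, $TT^*y=\frac{(\lambda_0^2-1)^2}{\lambda_0^2}y=\|T\|^2y$. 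Moreover $y\neq0$, since $y=0$ would force $x=0$. So $\|T\|^2$ is an eigenvalue of $TT^*$, and $T^*\in\na$, hence $T\in\na$ by Lemma \ref{lem:PP}.

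The main obstacle is just the careful handling of the second row, where one must invert $\lambda_0^2-V_2V_2^*$ on the relevant vector. This is easy here because $V_2V_2^*$ is a projection and $V_2T^*y$ lies in its range. The scalar identity $\lambda_0^2-1=\lambda_0\|T\|$ then makes both directions close up.
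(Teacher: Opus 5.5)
Your proof is correct and is essentially the paper's argument in mirror image. The paper solves the eigenvalue equation for $\Fog^*\Fog$ at $\lambda=\|\Fog\|^2$, eliminating via $V_1^*$ and using the test vector $x=\frac{1}{\lambda-1}V_1Ty$. You instead solve it for $\Fog\Fog^*$, eliminate the second coordinate by inverting $\lambda_0^2-V_2V_2^*$ (which acts as $\lambda_0^2-1$ on $R(V_2)$), and pass between $\Fog,\Fog^*$ and $T,T^*$ with Lemma~\ref{lem:PP}. A small bonus of your version is that it uses the same Gram operator as Proposition~\ref{normformula}, so by \eqref{neq2} it carries over, with only cosmetic changes in the case $T=0$, to a $V_2$ that is merely a contraction isometric on $R(T^*)$.
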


\begin{proof}
If $T=0$, then $\Fog=
\begin{bmatrix}
V_1^* & 0\\
0 & V_2
\end{bmatrix}$, and hence $\|\Fog\|=1$. Since $V_j$ is an isometry, $\Fog$
attains its norm. Thus we may assume that $T\neq0$.

Let $\lambda=\|\Fog\|^2$. By the norm formula $\|\Fog\|=
\frac{\|T\|+\sqrt{\|T\|^2+4}}{2}$, we can write $\|T\|=
\|\Fog\|-\dfrac{1}{\|\Fog\|}
=\sqrt{\lambda}-\dfrac{1}{\sqrt{\lambda}}$.

\smallskip
Assume that $\Fog$ is norm attaining. Then there exists a
unit vector $(x,y)\in H_1\oplus H_2$ such that $\Fog^*\Fog(x,y)=
\lambda(x,y)$. Since $V_2^*V_2=I_{H_2}$, we get
$$
\Fog^*\Fog
=\begin{bmatrix}
V_1V_1^* & V_1T\\
T^*V_1^* & T^*T+I_{H_2}
\end{bmatrix}.
$$
Thus it follows that
\begin{align}
V_1V_1^*x + V_1Ty&=\lambda x, \label{na1}\\
T^*V_1^*x+(T^*T+I_{H_2})y&=\lambda y. \label{na2}
\end{align}
Applying $V_1^*$ to \eqref{na1}, we obtain $V_1^*x+Ty=\lambda V_1^*x$, and hence $Ty=(\lambda-1)V_1^*x$. Consequently, $T^*Ty=(\lambda-1)T^*V_1^*x$, that is, $T^*V_1^*x
=\dfrac{1}{\lambda-1}T^*Ty$. Substituting this into \eqref{na2}, we obtain $T^*Ty=
\dfrac{(\lambda-1)^2}{\lambda}y=\left(\sqrt{\lambda}-\dfrac{1}{\sqrt{\lambda}}\right)^2 y=\|T\|^2 y$.

Since $(x,y)\neq(0,0)$ and $\lambda>1$, we must have $y\neq0$, otherwise
\eqref{na1} would imply $x=0$. Thus $\|T\|^2$ is an
eigenvalue of $T^*T$, and consequently $T$ attains its norm.

\smallskip
Conversely, let $T$ be norm attaining. Then there exists a
unit vector $y\in H_2$ such that $T^*Ty=\|T\|^2y$. Choose $x:=\dfrac{1}{\lambda-1}V_1Ty$. Then $x$ is a nonzero vector in $H_1$.

Now $\Fog^*\Fog(x,y)=(V_1V_1^*x+V_1Ty, T^*V_1^*x+T^*Ty+y)$. From the first component, we get 
\begin{align*}
V_1V_1^*x+V_1Ty&=\frac{1}{\lambda-1}V_1V_1^*V_1Ty+V_1Ty \\
&=\frac{1}{\lambda-1}V_1Ty+V_1Ty \\
&=\frac{\lambda}{\lambda-1}V_1Ty \\
&=\lambda x.
\end{align*}
From the second component, we get
\begin{align*}
T^*V_1^*x+T^*Ty+y &=\frac{T^*Ty}{\lambda-1}+T^*Ty+y \\
&=\left(\frac{\|T\|^2}{\lambda-1}
+\|T\|^2+1
\right)y \\
&=\left(\frac{\|T\|^2\lambda}{\lambda-1}+1\right)y \\
&=\lambda y.
\end{align*}
Therefore, $\Fog^*\Fog(x,y)=\lambda(x,y)$ with $(x,y)\neq (0,0)$, and so $\|\Fog\|^2=\lambda$ is an eigenvalue for $\Fog^*\Fog$. Thus $\Fog$ is norm attaining.  
\end{proof}

\medskip
Our next goal is to discuss the absolutely norm attaining property of the operator given in \cite[Question 3.12]{AN:Matrix2026}. A sufficient condition for a positive $2\times 2$ block operator matrix to be $\mathcal{AN}$ is discussed
in \cite{AN:Matrix2026}. Here we obtain a complete characterization. This result serves as a key tool in our subsequent characterization of Foguel-type operators to be $\an$.

\begin{theorem}\label{AN block iff}
Let $T=\begin{bmatrix}
A_1 & X\\
X^* & A_2
\end{bmatrix}\in \clb(H_1\oplus H_2)_+$. Then the following are equivalent:
\begin{enumerate}
\item[(i)] $T\in \an(H_1\oplus H_2)$;

\item[(ii)] $A_j\in \an(H_j)_+$ with
$\es(A_1)=\es(A_2)=\{\alpha\}$
for some $\alpha\geq 0$, and
$$
X=
\left(A_1^{\alpha+}+F_1\right)^{\frac{1}{2}}
C
\left(A_2^{\alpha+}+F_2\right)^{\frac{1}{2}} + F
$$
for some contraction $C$, $F\in\mathcal{F}(H_2,H_1)$ and $F_j\in\mathcal{F}(H_j)_{sa}$
such that $A_j^{\alpha+}+F_j\in \clk(H_j)_+$.
\end{enumerate}
\end{theorem}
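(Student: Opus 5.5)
We will rely on the known structure theorem for positive $\an$-operators (Pandey--Paulsen / Venku Naidu--Ramesh): a positive operator $P$ on $H$ belongs to $\an(H)_+$ if and only if $P=\alpha I+K+F$, where $\alpha\ge0$, $K\in\clk(H)_+$, $F\in\clf(H)_{sa}$, $KF=0$, and $F\le 0$ on its range with $\alpha I+F\ge 0$. Equivalently, $\es(P)=\{\alpha\}$, $(P-\alpha I)^+$ is compact and $(P-\alpha I)^-$ has finite rank. The plan is to translate both (i) and (ii) into this spectral description and compare block entries.

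\textbf{(i)$\Rightarrow$(ii).} Suppose $T\in\an(H_1\oplus H_2)_+$ and write $T=\alpha I+K+F$ as above. Compressing to $H_j$ gives $A_j=P_jTP_j|_{H_j}=\alpha I_{H_j}+K_{jj}+F_{jj}$, with $K_{jj}$ compact and $F_{jj}$ finite rank. Hence $\es(A_j)=\{\alpha\}$. Since $A_j\ge0$, the operator $A_j-\alpha I$ is self-adjoint with essential spectrum $\{0\}$, so $A_j^{\alpha+}$ is compact. For $A_j^{\alpha-}$, we use that $A_j-\alpha I\ge -F^-_{jj}$, where $F^-_{jj}$ is the compression of $F^-$; a min-max argument then shows that $A_j-\alpha I$ has only finitely many negative eigenvalues, counted with multiplicity, below any level. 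Combined with the essential spectrum being $\{0\}$ from the compact side, this needs care. The cleaner route is to observe that restrictions of $\an$ positive operators to subspaces are again $\an$ (compressions $P_MTP_M|_M$ of positive $\an$ operators are $\an$, by the variational characterization). This yields $A_j\in\an(H_j)_+$ directly, and hence $A_j^{\alpha-}\in\clf$.

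Next we produce the factorization of $X$. Put $B=T-\alpha I+F^-$, where $F^-$ is the finite rank negative part, so that $B=K+F^+\ge0$ is compact positive. The off-diagonal entry of $B$ is $X+F'$ for some $F'\in\clf$. The diagonal entries are $A_j-\alpha I+G_j$ with $G_j\in\clf_{sa}$, and these can be written as $A_j^{\alpha+}+F_j$ by absorbing $-A_j^{\alpha-}+G_j$ into $F_j$. Since $B\ge0$, the positivity criterion of Section 2 gives $X+F'=(A_1^{\alpha+}+F_1)^{1/2}C(A_2^{\alpha+}+F_2)^{1/2}$ for a contraction $C$, with $A_j^{\alpha+}+F_j\in\clk(H_j)_+$. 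Taking $F=-F'$ gives (ii).

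\textbf{(ii)$\Rightarrow$(i).} Set $D_j=A_j^{\alpha+}+F_j\in\clk_+$ and $Y=D_1^{1/2}CD_2^{1/2}$. The matrix $M=\begin{bmatrix}D_1&Y\\Y^*&D_2\end{bmatrix}$ is positive by the same criterion, and compact since each block is compact. Then
$$T=\alpha I+M+\begin{bmatrix}-A_1^{\alpha-}-F_1&F\\F^*&-A_2^{\alpha-}-F_2\end{bmatrix},$$
using $A_j=\alpha I+A_j^{\alpha+}-A_j^{\alpha-}$. Here $A_j^{\alpha-}$ is finite rank because $A_j\in\an_+$ with $\es=\{\alpha\}$. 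So $T=\alpha I+M+G$ with $M\ge0$ compact and $G\in\clf_{sa}$. It follows that $\es(T)=\{\alpha\}$ and $(T-\alpha I)^+$ is compact. Moreover, $(T-\alpha I)^-$ has finite rank, because $T-\alpha I\ge G\ge -G^-$ forces the negative spectral subspace to have dimension at most $\operatorname{rank}G^-$ by min-max. Therefore $T\in\an_+$ by the characterization.

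The main obstacle is the compression step in (i)$\Rightarrow$(ii): showing that $A_j\in\an$ and, in particular, that $A_j^{\alpha-}$ has finite rank. We would handle it with the same min-max bound, $A_j-\alpha I\ge -P_jF^-P_j$, which already gives finite rank of the negative part without any general compression lemma.
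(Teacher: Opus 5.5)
Your proposal is correct and follows the paper's proof. You write $T=\alpha I+K-F$ via the Venku Naidu--Ramesh structure theorem and read off the blocks. You factor $X$ by applying the block positivity criterion to the compact positive part, absorbing $A_j^{\alpha-}$ into $F_j$. Conversely, you reassemble $T=\alpha I+\widetilde K+\widetilde F$.

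The only difference is where the paper cites \cite[Theorem 5.1]{Paulsen}, namely that $\alpha I$ plus a positive compact operator plus a self-adjoint finite rank operator is $\an$. You instead verify this through the bound $\operatorname{rank}S^-\le\operatorname{rank}G$ whenever $S\ge -G$ with $G\ge 0$ of finite rank. That bound works directly, so your hedging about the argument ``needing care'' and the detour through a compression lemma are unnecessary.
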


\begin{proof}
(i)$\implies$(ii): Assume that $T\in \an(H_1\oplus H_2)$. Then by \cite[Theorem 2.5]{VR}, there
exists $\alpha\geq 0$, $F\in\mathcal{F}(H_1\oplus H_2)_+$ and
$K\in \clk(H_1\oplus H_2)_+$ such that $T=\alpha I_{H_1\oplus H_2}+K-F$.

Suppose
$$
K=
\begin{bmatrix}
K_{11} & K_{12}\\
K_{21} & K_{22}
\end{bmatrix}
\qquad\text{and}\qquad
F=
\begin{bmatrix}
F_{11} & F_{12}\\
F_{21} & F_{22}
\end{bmatrix}.
$$
Since $K\geq 0$, we have $K_{21}=K_{12}^*$,
$K_{11},K_{22}\geq 0$ and
$K_{12}=K_{11}^{\frac{1}{2}} C K_{22}^{\frac{1}{2}}$ for some contraction $C$.
Since $F\geq 0$, we have $F_{21}=F_{12}^*$,
$F_{11},F_{22}\geq 0$. Then
$$
T-\alpha I=
\begin{bmatrix}
K_{11} & K_{12}\\
K_{12}^* & K_{22}
\end{bmatrix}
-
\begin{bmatrix}
F_{11} & F_{12}\\
F_{12}^* & F_{22}
\end{bmatrix}
=
\begin{bmatrix}
K_{11}-F_{11} & K_{12}-F_{12}\\
K_{12}^*-F_{12}^* & K_{22}-F_{22}
\end{bmatrix},
$$
that is,
$$
\begin{bmatrix}
A_1-\alpha I_{H_1} & X\\
X^* & A_2-\alpha I_{H_2}
\end{bmatrix}
=
\begin{bmatrix}
K_{11}-F_{11} & K_{12}-F_{12}\\
K_{12}^*-F_{12}^* & K_{22}-F_{22}
\end{bmatrix}.
$$
Therefore, $A_j=\alpha I_{H_j}+K_{jj}-F_{jj}$ and
$X=K_{12}-F_{12}$. Thus by \cite[Theorem 5.1]{Paulsen}, we get $A_j\in \an(H_j)_+$. Also by Weyl's theorem, $\es(A_1)=\es(A_2)=\{\alpha\}$.

By \cite[Theorem 4.1]{AN Closure}, we can write
$$
A_j-\alpha I_{H_j}
=(A_j-\alpha I_{H_j})^+
-(A_j-\alpha I_{H_j})^-,
$$
where $(A_j-\alpha I_{H_j})^+\in \clk(H_j)_+$ and
$(A_j-\alpha I_{H_j})^-\in\clf(H_j)_+$. Hence
$$
K_{jj}-F_{jj}
=(A_j-\alpha I_{H_j})^+
-(A_j-\alpha I_{H_j})^-.
$$
Therefore
$$
K_{jj}
=(A_j-\alpha I_{H_j})^+
+\left(F_{jj}-(A_j-\alpha I_{H_j})^-\right).
$$
Since both $F_{jj}$ and $(A_j-\alpha I_{H_j})^-$ are finite rank
self-adjoint operators, the operator $F_j:=F_{jj}-(A_j-\alpha I_{H_j})^-$ is finite rank self-adjoint. Thus
$$
K_{jj}=(A_j-\alpha I_{H_j})^+ +F_j.
$$

Substituting into the formula for $X$, we obtain
$$
X=\left((A_1-\alpha I_{H_1})^+ +F_1\right)^{\frac{1}{2}}\,C\,\left((A_2-\alpha I_{H_2})^+ +F_2\right)^{\frac{1}{2}}
-F_{12}.
$$
Writing $A_j^{\alpha+}=(A_j-\alpha I_{H_j})^+$ and $F:=-F_{12}$, we get
$$
X=(A_1^{\alpha+}+F_1)^{\frac{1}{2}}\,C\,
(A_2^{\alpha+}+F_2)^{\frac{1}{2}}+F.
$$
\smallskip

(ii)$\implies$(i): Let (ii) hold true. Since $A_j\in \an(H_j)_+$ with
$\es(A_j)=\{\alpha\}$, \cite[Theorem 4.1]{AN Closure} yields
$$
A_j=\alpha I_{H_j}
+(A_j-\alpha I_{H_j})^+
-(A_j-\alpha I_{H_j})^-,
$$
where $A_j^{\alpha+}=(A_j-\alpha I_{H_j})^+\in \clk(H_j)_+$ and $A_j^{\alpha-}=(A_j-\alpha I_{H_j})^-\in\mathcal{F}(H_j)_+$.

By the assumption, there exist a contraction $C$, $F\in \clf(H_2,H_1)$ and $F_j\in \clf(H_j)_{sa}$ with
$A_j^{\alpha+}+F_j\in \clk(H_j)_+$ such that
$$
X=(A_1^{\alpha+}+F_1)^{\frac{1}{2}}\,C\,
(A_2^{\alpha+}+F_2)^{\frac{1}{2}}+F.
$$
Consider $\widetilde{K}_j=A_j^{\alpha+} + F_j$. Then using the block positivity criterion,
$$
\widetilde{K}=
\begin{bmatrix}
\widetilde{K}_1 & \widetilde{K}_1^{\frac{1}{2}}C\widetilde{K}_2^{\frac{1}{2}}\\
\widetilde{K}_2^{\frac{1}{2}}C^*\widetilde{K}_1^{\frac{1}{2}} & \widetilde{K}_2
\end{bmatrix}
\geq 0,
$$
and
$$
\widetilde{F}=
\begin{bmatrix}
-A_1^{\alpha-}-F_1 & F\\
F^* & -A_2^{\alpha-}-F_2
\end{bmatrix}
$$
is a self-adjoint finite rank operator. Consequently, we obtain $T=\alpha I_{H_1\oplus H_2}+\widetilde{K}+\widetilde{F}$, where $\widetilde{K}$ is positive compact and $\widetilde{F}$ is self-adjoint finite rank. Hence by \cite[Theorem 5.1]{Paulsen}, it follows that $T\in \an(H_1\oplus H_2)$. 
\end{proof}

Next, we characterize the Foguel-type operators that are absolutely norm attaining.
\begin{theorem}\label{AN Foguel}
Let $V_j\in\clb(H_j)$ be an isometry for $j=1,2$, and $T\in\clb(H_2,H_1)$.
Consider the Foguel-type operator $\Fog=\begin{bmatrix}
V_1^* & T\\
0 & V_2
\end{bmatrix}$ on $H_1\oplus H_2$.
Then the following are equivalent:
\begin{enumerate}
    \item $\Fog\in\an(H_1\oplus H_2)$;
    
    \item $\dim N(V_1^*)<\infty$ and $T$ is of finite rank.
\end{enumerate}
\end{theorem}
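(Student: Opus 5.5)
The plan is to pass to the modulus and apply Theorem \ref{AN block iff}. By \cite{VR}, $\Fog\in\an(H_1\oplus H_2)$ if and only if $\Fog^*\Fog\in\an(H_1\oplus H_2)_+$. As computed in the proof of Theorem \ref{NA Foguel},
$$
\Fog^*\Fog=\begin{bmatrix} V_1V_1^* & V_1T\\ T^*V_1^* & I_{H_2}+T^*T\end{bmatrix}.
$$
So we apply Theorem \ref{AN block iff} with
$$
A_1=V_1V_1^*=I_{H_1}-P_{N(V_1^*)},\qquad A_2=I_{H_2}+T^*T,\qquad X=V_1T.
$$

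\emph{(1)$\Rightarrow$(2).} Assume $\Fog^*\Fog\in\an_+$. Theorem \ref{AN block iff} gives some $\alpha\ge0$ with $\es(A_1)=\es(A_2)=\{\alpha\}$.
\begin{enumerate}
\item Since $H_1$ is infinite dimensional and $V_1$ is an isometry, $R(V_1)$ is infinite dimensional. Hence $1\in\es(A_1)$, so $\alpha=1$.
\item If $N(V_1^*)$ were infinite dimensional, then $0$ would also lie in $\es(A_1)$, which is impossible. Hence $\dim N(V_1^*)<\infty$.
\item Next, $\es(I+T^*T)=\{1\}$. Since $I+T^*T$ is also in $\an_+$, its structure (compact positive part plus finite rank negative part around $\alpha$, \cite[Theorem 4.1]{AN Closure}) gives that $T^*T$ is compact, so $T$ is compact.
\item Because $A_1-I=-P_{N(V_1^*)}\le 0$, we have $A_1^{1+}=0$. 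The hypothesis $A_1^{1+}+F_1\in\clk(H_1)_+$ then says $F_1\ge0$ and $F_1$ has finite rank. Hence $F_1^{1/2}$ has finite rank.
\item The factorization in Theorem \ref{AN block iff} reads
$$
V_1T=F_1^{1/2}\,C\,(A_2^{1+}+F_2)^{1/2}+F .
$$
Its right-hand side has finite rank.
\item Therefore $T=V_1^*(V_1T)$ has finite rank.
\end{enumerate}

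\emph{(2)$\Rightarrow$(1).} Assume $\dim N(V_1^*)<\infty$ and $T$ has finite rank. The cleanest route is to note that
$$
\Fog^*\Fog-I=\begin{bmatrix}-P_{N(V_1^*)} & V_1T\\ T^*V_1^* & T^*T\end{bmatrix}
$$
is a self-adjoint finite rank operator. So $\Fog^*\Fog=I+\widetilde F$ with $\widetilde F\in\clf_{sa}$, which is of the form (scalar)$+$(positive compact, here $0$)$+$(self-adjoint finite rank). By \cite[Theorem 5.1]{Paulsen}, $\Fog^*\Fog\in\an_+$, and hence $\Fog\in\an$.

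Alternatively, one can check condition (ii) of Theorem \ref{AN block iff} directly with $\alpha=1$, $F_1=0$, $F_2=0$, $C=0$ and $F=V_1T$. This uses that $A_1$ is a projection with finite dimensional kernel, hence in $\an_+$, and that $A_2^{1+}=T^*T$ is finite rank.

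The only delicate point is (1)$\Rightarrow$(2). There one must identify $\alpha=1$ from the infinite dimensionality of $R(V_1)$, and then exploit $A_1^{1+}=0$ to force the compact factor $F_1$ to be finite rank. Once that is done, finite rank of $V_1T$, and hence of $T$, follows immediately.
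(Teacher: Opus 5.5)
Your proposal is correct and follows essentially the paper's route: you pass to $\Fog^*\Fog$, apply Theorem \ref{AN block iff} with $\alpha=1$, use $A_1^{1+}=0$ to force $V_1T$ (hence $T$) to have finite rank, and for the converse write $\Fog^*\Fog=I+\widetilde F$ with $\widetilde F$ self-adjoint of finite rank. The differences are minor: you get $\alpha=1$ from $1\in\es(V_1V_1^*)$ rather than via compactness of $T$ and Weyl's theorem, you spell out that $F_1\ge 0$ has finite rank so $F_1^{1/2}$ does too (which the paper leaves implicit), and your Step 3 on compactness of $T$ is not needed.
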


\begin{proof}
Since $V_2$ is an isometry, we have $$\Fog^*\Fog=
\begin{bmatrix}
V_1V_1^* & V_1T\\
T^*V_1^* & T^*T+I_{H_2}
\end{bmatrix}.$$
Let $A_1=V_1V_1^*, A_2=T^*T+I_{H_2}$ and $X=V_1T$. Note that $V_1V_1^*$ is an orthogonal projection onto $R(V_1)$. Consequently, we can write $V_1V_1^*=I-P_{N(V_1^*)}$, where $P_{N(V_1^*)}$ denotes the orthogonal projection onto
$N(V_1^*)$. Hence
$$
\Fog^*\Fog
=I_{H_1\oplus H_2}+\begin{bmatrix}
-P_{N(V_1^*)} & V_1T\\
T^*V_1^* & T^*T
\end{bmatrix}.
$$

(i)$\implies$(ii): Assume that (i) holds true. Then by \cite[Corollary 2.11]{VR}, $\Fog^*\Fog\in\an(H_1\oplus H_2)_+$. By Theorem \ref{AN block iff}, it follows that $A_1$ and $A_2$ are $\an$ with $\es(A_1)=\es(A_2)$ and $X$ is compact. Consequently, we get $T=V_1^*X$ is compact. Thus by Weyl's theorem, $\es(A_2)=\{1\}=\es(A_1)$. If $N(V_1^*)$ is infinite dimensional, then $0$ is an eigenvalue of $V_1V_1^*$ with infinite multiplicity. In this case, $0$ would belong to $\es(A_1)$, which is impossible. Hence $N(V_1^*)$ must be finite dimensional.

Moreover, by Theorem \ref{AN block iff}, we have $X=(A_1^{\alpha+}+F_1)^{\frac{1}{2}}\,C\,(A_2^{\alpha+}+F_2)^{\frac{1}{2}}+F$, where $\alpha=1$, $F_j\in\clf(H_j)_{sa}$, $F\in\clf(H_2,H_1)$ and $C$ is a contraction.

Since $\alpha=1$, we get $A_1^{\alpha+}=(A_1-\alpha I_{H_1})^+=(I_{H_1}-P_{N(V_1^*)} -I_{H_1})^+=0$. Thus $X$ becomes a finite rank operator. Since $V_1$ is an isometry, $X=V_1T$ has finite rank implies $T$ has finite rank.

\smallskip
(ii)$\implies$(i): Assume that $\dim N(V_1^*)<\infty$ and $T$ has finite rank. Then $\Fog^*\Fog=I_{H_1\oplus H_2} + F$, where $F=\begin{bmatrix}
-P_{N(V_1^*)} & V_1T\\
T^*V_1^* & T^*T
\end{bmatrix}$ is a finite rank operator. By \cite[Proposition 4.5]{Paulsen} and \cite[Corollary 2.11]{VR}, we obtain (i).
\end{proof}

Next, we discuss when $\Fog\in\overline{\an(H_1\oplus H_2)}$, where the closure
is taken with respect to the operator norm. The closure of the
$\mathcal{AN}$-class is described in detail in \cite{AN Closure}, and a characterization of positive $2\times 2$ block operator matrices in this closure is given in \cite{AN:Matrix2026}. Now we describe the same for Foguel-type operators.

\begin{theorem}\label{closure-AN-Foguel}
Let $V_j\in\clb(H_j)$ be an isometry for $j=1,2$, and $T\in\clb(H_2,H_1)$.
Consider the Foguel-type operator
$\Fog=\begin{bmatrix}
V_1^* & T\\
0 & V_2
\end{bmatrix}$ on $H_1\oplus H_2$.
Then the following are equivalent:
\begin{enumerate}
    \item $\Fog\in
    \overline{\mathcal{AN}(H_1\oplus H_2)}$;
    
    \item $\dim N(V_1^*)<\infty$ and $T$ is compact.
\end{enumerate}
\end{theorem}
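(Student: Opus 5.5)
The plan is to pass, as in the proof of Theorem \ref{AN Foguel}, from $\Fog$ to the positive operator $\Fog^*\Fog$. For that I need a closure analogue of \cite[Corollary 2.11]{VR}. The result I will invoke is the characterization in \cite{AN Closure}: an operator $S$ lies in $\overline{\an}$ if and only if $S^*S=\alpha I+K-F$ with $\alpha\ge0$, $K\in\clk_+$ and $F\in\clf_+$. Equivalently, $S^*S\in\overline{\an}_+$, which for positive operators means that $\es(S^*S)$ is a single point $\{\alpha\}$ and that $\sigma(S^*S)\cap[0,\alpha)$ consists of finitely many eigenvalues of finite multiplicity. As before,
$$
\Fog^*\Fog=I_{H_1\oplus H_2}+\begin{bmatrix}-P_{N(V_1^*)} & V_1T\\ T^*V_1^* & T^*T\end{bmatrix}.
$$

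For (ii)$\implies$(i), assume $\dim N(V_1^*)<\infty$ and $T$ compact. Then $P_{N(V_1^*)}$ has finite rank, while $V_1T$ and $T^*T$ are compact. Hence
$$
\Fog^*\Fog=I+K-F,
$$
where $K$ is the compact positive block $\begin{bmatrix}V_1TT^*V_1^* & V_1T\\ T^*V_1^* & T^*T\end{bmatrix}$ minus a compact correction, and $F$ has finite rank. A cleaner way to see this is to use only the spectral description. The essential spectrum of $\Fog^*\Fog$ is $\{1\}$ by Weyl's theorem, and the spectrum below $1$ then consists only of isolated eigenvalues of finite multiplicity. By the \cite{AN Closure} characterization of positive elements of the closure this gives $\Fog^*\Fog\in\overline{\an}_+$, and hence $\Fog\in\overline{\an}$. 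The main point to check is that the spectral condition alone characterizes $\overline{\an}_+$; in fact \cite{AN Closure} states exactly this, namely that a positive operator with $\es=\{\alpha\}$ is $\alpha I+K-F$.

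For (i)$\implies$(ii), assume $\Fog\in\overline{\an}$. Then $\Fog^*\Fog=\alpha I+K-F$, so $\es(\Fog^*\Fog)=\{\alpha\}$. Compressing to $H_1$ and $H_2$ gives the diagonal entries $A_1=V_1V_1^*$ and $A_2=I+T^*T$. These are compressions of $\alpha I+K-F$, so $\es(A_j)=\{\alpha\}$, just as in the proof of Theorem \ref{AN block iff}.
\begin{itemize}
\item[$\bullet$] Since $A_2\ge I$ and $\es(A_2)=\{\alpha\}$, we get $\alpha\ge1$.
\item[$\bullet$] $A_1$ is a projection, so $\es(A_1)\subseteq\{0,1\}$. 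Together with $\alpha\ge1$ this forces $\alpha=1$. Then $0\notin\es(A_1)$, so $N(V_1^*)$ is finite dimensional.
\item[$\bullet$] Finally, $\es(I+T^*T)=\{1\}$ means $\es(T^*T)=\{0\}$. Thus $T^*T$ is a positive operator whose essential spectrum is $\{0\}$, hence compact, and so $T$ is compact.
\end{itemize}

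The delicate step is justifying $\es(A_j)=\{\alpha\}$ from the decomposition. Here $A_j-\alpha I=K_{jj}-F_{jj}$ is the compression of a compact operator, so it is compact, and $\es(A_j)=\{\alpha\}$ follows. This is therefore straightforward.
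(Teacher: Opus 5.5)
Your reduction to $\Fog^*\Fog$ is sound, but the criterion you attribute to \cite{AN Closure} characterizes $\an$ itself, not its closure. In (ii)$\implies$(i) this error is load-bearing.

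\textbf{The gap.} The form $\alpha I+K-F$ with $K\in\clk_+$, $F\in\clf_+$ (equivalently, $\es=\{\alpha\}$ with only finitely many spectral points in $[0,\alpha)$) describes $\an(H)_+$, by \cite[Theorem 2.5]{VR} and \cite[Theorem 5.1]{Paulsen}. It is false that every positive operator with $\es=\{\alpha\}$ has this form; $I-\mathrm{diag}(1/n)$ on $\ell^2(\N)$ is a counterexample.

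Your criterion in fact contradicts the theorem you are proving. If $\dim N(V_1^*)<\infty$ and $T$ is compact of infinite rank, then $\Fog\notin\an$ by Theorem \ref{AN Foguel}. So $\Fog^*\Fog$ is \emph{not} of the form $I+K-F$, yet such $\Fog$ lie in the closure. Concretely, take $V_1=V_2=I$ and $T=\mathrm{diag}(1/n)$. Then $\Fog^*\Fog$ is the direct sum of the matrices $\begin{bmatrix}1&1/n\\1/n&1+1/n^2\end{bmatrix}$, each with determinant $1$ and trace $>2$. Hence it has infinitely many eigenvalues in $(0,1)$ accumulating at $1$. This is why your first attempt to write $\Fog^*\Fog=I+K-F$ cannot be completed. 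It is also why your ``cleaner'' version, which quietly drops ``finitely many'', does not follow from the statement you cite.

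\textbf{The repair.} Use the correct closure statements:
\begin{itemize}
\item[$\bullet$] $S\in\overline{\an}$ if and only if $S^*S\in\overline{\an}_+$ \cite[Theorem 3.15]{AN Closure};
\item[$\bullet$] a positive operator lies in $\overline{\an}$ exactly when it equals $\alpha I+K$ with $K$ compact self-adjoint, i.e.\ when its essential spectrum is a singleton \cite[Theorem 4.6]{AN Closure}. Nothing is required of the spectrum below $\alpha$.
\end{itemize}
With these, (ii)$\implies$(i) is just Weyl's theorem applied to the compact operator $\Fog^*\Fog-I$. Your (i)$\implies$(ii) survives essentially verbatim, because there you only use that $\Fog^*\Fog-\alpha I$ is compact, and hence so is each diagonal compression $A_j-\alpha I$.

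\textbf{Comparison with the paper.} Once repaired, your argument is a genuinely different and more elementary route. The paper invokes the block characterization \cite[Theorem 5.3]{AN:Matrix2026} and gets compactness of $T$ from the off-diagonal entry $X=V_1T$. It then deduces $\dim N(V_1^*)<\infty$ from $\es(V_1V_1^*)=\es(I+T^*T)=\{1\}$, and reassembles the converse through the same block theorem. You never look at $X$:
\begin{itemize}
\item[$\bullet$] $\alpha=1$ is forced by $A_2\ge I$ together with $\es(A_1)\subseteq\{0,1\}$;
\item[$\bullet$] finiteness of $N(V_1^*)$ follows from $0\notin\es(A_1)$;
\item[$\bullet$] compactness of $T$ comes from $T^*T=A_2-I$.
\end{itemize}
So the block theorem is avoided altogether.
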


\begin{proof}
Since $V_j$ is isometry for $j=1,2$, we have $V_2^*V_2=I_{H_2}$ and $V_1V_1^*=I_{H_1}-P_{N(V_1^*)}$,
where $P_{N(V_1^*)}$ denotes the orthogonal projection
onto $N(V_1^*)$. Consequently,
$$
\Fog^*\Fog=\begin{bmatrix}
V_1V_1^* & V_1T\\
T^*V_1^* & T^*T+I_{H_2}
\end{bmatrix}=\begin{bmatrix}
A_1 & X\\
X^* & A_2
\end{bmatrix},
$$
where $A_1=V_1V_1^*$, $A_2=T^*T+I_{H_2}$ and $X=V_1T$.

\smallskip
(i)$\implies$(ii):
Assume that $\Fog\in\overline{\mathcal{AN}(H_1\oplus H_2)}$. Then by \cite[Theorem 3.15]{AN Closure}, $\Fog^*\Fog \in\overline{\mathcal{AN}(H_1\oplus H_2)}_{+}$. Hence by \cite[Theorem 5.3]{AN:Matrix2026}, we have $A_j\in \overline{\an(H_j)}_+$ with $\es(A_1)=\es(A_2)$ and $X$ is compact.
Since $V_1^*V_1=I_{H_1}$, we obtain $T=V_1^*(V_1T)=V_1^*X\in\clk(H_2,H_1)$.
It follows that $T^*T$ is compact. Hence by Weyl's theorem, $\es(A_2)=\es(T^*T+I_{H_2})=\{1\}$. Since $\es(A_1)
=\es(A_2)$, we have $\es(V_1V_1^*)=\{1\}$.

If $\dim N(V_1^*)=\infty$, then $0$ is an eigenvalue of $V_1V_1^*=I_{H_1}-P_{N(V_1^*)}$ of infinite multiplicity.
Consequently,
$0\in\es(V_1V_1^*)$, which contradicts
$\es(V_1V_1^*)=\{1\}$.
Therefore, $\dim N(V_1^*)<\infty$.

\medskip
(ii)$\implies$(i):
Conversely, let (ii) hold true.
Then $P_{N(V_1^*)}$ has finite rank. Thus by
Weyl's theorem, $\es(A_1)=\es(I_{H_1}-P_{N(V_1^*)})
=\{1\}$.
Similarly, since $T^*T$ is compact, we obtain $\es(A_2)=\es(T^*T+I_{H_2})=\{1\}$. By \cite[Theorem 4.6]{AN Closure}, it follows that $A_j\in\overline{\an(H_j)}_+$.
Moreover, $X=V_1T$ is compact, as
$T$ is compact. Therefore, \cite[Theorem 5.3]{AN:Matrix2026} implies that $\Fog^*\Fog\in \overline{\an(H_1\oplus H_2)}_+$. Finally, by \cite[Theorem 3.15]{AN Closure}, we conclude that $\Fog\in\overline{\an(H_1\oplus H_2)}$.
\end{proof}

\section{Minimum and Absolutely Minimum Attainment}
In this section, we examine the minimum modulus and reduced minimum modulus of Foguel-type operators. We first determine
the minimum modulus in the non-unitary case and derive an
explicit formula for the reduced minimum modulus. We then
characterize reduced minimum attainment in terms of norm
attainment of the off-diagonal operator $T$. Finally, by
establishing a characterization of absolutely minimum
attaining positive $2\times2$ block operator matrices, we
obtain necessary and sufficient conditions for a Foguel
type operator to be absolutely minimum attaining.
\begin{proposition}
    Let $V_1\in \clb(H_1)$ be a non-unitary isometry, $V_2\in\clb(H_2)$ is an isometry and $T\in \clb(H_2,H_1)$. Consider the Foguel-type operator
$\Fog=\begin{bmatrix}
V_1^* & T\\
0 & V_2
\end{bmatrix}$.  Then $m(\Fog)=0$.
\end{proposition}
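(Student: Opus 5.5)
The plan is to exhibit a nonzero vector in the kernel of $\Fog$. Since $m(\Fog)=\inf\{\|\Fog z\|: z\in S_{H_1\oplus H_2}\}$, any unit vector $z$ with $\Fog z=0$ gives $m(\Fog)\le \|\Fog z\|=0$. Together with the trivial bound $m(\Fog)\ge 0$, this yields $m(\Fog)=0$, and it also shows that the infimum is attained.

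First, because $V_1$ is an isometry that is not unitary, its range $R(V_1)$ is a proper closed subspace of $H_1$. Hence the defect space $N(V_1^*)=R(V_1)^{\perp}$ is nonzero, and we may choose a unit vector $u\in N(V_1^*)$.

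Second, set $z=(u,0)\in H_1\oplus H_2$, which is a unit vector. Computing blockwise,
$$
\Fog z=\begin{bmatrix} V_1^* & T\\ 0 & V_2\end{bmatrix}\begin{bmatrix} u\\ 0\end{bmatrix}=\begin{bmatrix} V_1^*u+T0\\ V_2 0\end{bmatrix}=\begin{bmatrix}0\\0\end{bmatrix}.
$$
Thus $z$ is the required unit vector in $N(\Fog)$, and $m(\Fog)=0$ follows. In fact this computation shows $N(V_1^*)\oplus\{0\}\subseteq N(\Fog)$, which is consistent with the description of $N(\Fog)$ stated in the introduction.

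There is no genuine obstacle here. The only point that needs justification is that non-unitarity of an isometry forces $N(V_1^*)\neq\{0\}$. This holds because an isometry is injective with closed range, so it is unitary exactly when it is surjective, that is, exactly when $N(V_1^*)=R(V_1)^\perp=\{0\}$; this was recalled in the preliminaries. Note that the argument uses neither the hypothesis that $V_2$ is an isometry nor any property of $T$.
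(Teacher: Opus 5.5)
Your proof is correct and is essentially the paper's argument: the paper also takes a unit vector $x\in N(V_1^*)$ and observes that $\Fog(x,0)=(0,0)$. The only difference is that the paper first computes the full kernel $N(\Fog)=N(V_1^*)\oplus\{0\}$ using injectivity of $V_2$; as you note, this step is not needed to conclude $m(\Fog)=0$.
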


\begin{proof}
Let $(x,y)\in N(\Fog)$. Then $\Fog(x,y)=(0,0)$ gives
$$
V_1^* x + Ty = 0\, \text{ and } \,V_2y=0.
$$
Since $N(V_2)=\{0\}$, we obtain $y=0$. Consequently, $V_1^{*}x=0$,
and therefore
$$
N(\Fog)=N(V_1^{*})\oplus\{0\}.
$$
Since $V_1$ is non-unitary, $N(V_1^{*})\neq\{0\}$. Choose $x\in N(V_1^{*})$ with $\|x\|=1$. Then $\|(x,0)\|=1$
and
$$
\Fog(x,0)=(V_1^*x,0)=(0,0).
$$
Hence $m(\Fog)=\inf_{\|(x,y)\|=1}\|\Fog(x,y)\|=0$.
\end{proof}

For a quadratic operator $\begin{bmatrix}
a I_{H_1} & X \\
0 & b I_{H_2}
\end{bmatrix}$ on $H_1\oplus H_2$, the norm was computed in \cite[Lemma 1.2]{Feldman}, and the minimum
modulus of such an operator was obtained in \cite[Theorem 3.1]{Bhuia}, where
it is derived from the inverse and the norm formula of \cite{Feldman}. In the
following lemma we give a direct proof of the formula, in a form that also covers the cases $a=0$ and $b=0$. This representation is useful in determining the reduced minimum modulus of Foguel-type operators.

\begin{lemma}\label{minmod:quadratic}
Let $T=\begin{bmatrix}
aI_{H_1} & X\\
0 & bI_{H_2}
\end{bmatrix}$, where $a,b\in\C$ and $X\in\clb(H_2,H_1)$. Then the minimum modulus of $T$ is given by
\begin{equation}\label{minmod}
m(T)=\frac{1}{2}
\left[
\left(\|X\|^2+(|a|+|b|)^2\right)^{\frac{1}{2}}
-\left(\|X\|^2+(|a|-|b|)^2\right)^{\frac{1}{2}}
\right].
\end{equation}
\end{lemma}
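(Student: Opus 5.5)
We plan to compute $m(T)^2$ as the bottom of the spectrum of $T^*T$, which is the infimum of $\langle T^*Tz,z\rangle$ over unit vectors $z$. We first reduce to real nonnegative $a,b$ and then reduce to a $2\times 2$ scalar problem via the singular values of $X$.

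\emph{Reduction to $a,b\ge 0$.} Write $a=|a|e^{i\alpha}$ and $b=|b|e^{i\beta}$. Conjugating by the unitaries $U=e^{-i\alpha}I_{H_1}\oplus e^{-i\beta}I_{H_2}$ (on the left) and $I_{H_1}\oplus I_{H_2}$ (on the right) replaces $T$ by $\begin{bmatrix}|a|I & e^{-i\alpha}X\\ 0 & e^{i(\beta-\alpha)}|b|I\end{bmatrix}$. One further diagonal unitary on the right then makes the $(2,2)$ entry equal to $|b|$, while the corner entry becomes a unimodular multiple of $X$. Neither $m$ nor $\|X\|$ changes, so we may assume $a,b\ge0$. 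If $X=0$, the formula gives $\min(a,b)$, which is clearly correct. So assume $X\neq0$.

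\emph{Lower bound.} For a unit vector $(x,y)$,
$$\|T(x,y)\|^2=\|ax+Xy\|^2+b^2\|y\|^2\ \ge\ (a\|x\|-\|X\|\|y\|)^2+b^2\|y\|^2 .$$
Hence, with $s=\|x\|$, $t=\|y\|$, $s^2+t^2=1$, $s,t\ge0$, we get $m(T)^2\ge \min_{s,t}\, q(s,t)$, where
$$q(s,t)=(as-\|X\|t)^2+b^2t^2 .$$
This is the quadratic form of the real matrix $M=\begin{bmatrix}a&-\|X\|\\0&b\end{bmatrix}$, that is, $q(s,t)=\|M(s,t)^T\|^2$. Its minimum over the full unit circle is the square of the smallest singular value of $M$. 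The sign constraint is harmless: the minimizing eigenvector of $M^TM=\begin{bmatrix}a^2&-a\|X\|\\-a\|X\|&\|X\|^2+b^2\end{bmatrix}$ has components of equal sign, since the off-diagonal entry is $\le0$.

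For $2\times2$ matrices, $\sigma_{\max}\pm\sigma_{\min}$ can be computed from $\sigma_1^2+\sigma_2^2=\|M\|_F^2=a^2+b^2+\|X\|^2$ and $\sigma_1\sigma_2=|\det M|=ab$. This gives $(\sigma_1\pm\sigma_2)^2=\|X\|^2+(a\pm b)^2$, hence
$$\sigma_{\min}=\tfrac12\Big[\big(\|X\|^2+(a+b)^2\big)^{1/2}-\big(\|X\|^2+(a-b)^2\big)^{1/2}\Big],$$
which is exactly \eqref{minmod}.

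\emph{Upper bound (the main obstacle).} This step is delicate because $X$ need not attain its norm, so we approximate. Choose unit vectors $u_n\in H_2$ with $\|Xu_n\|\to\|X\|$, and set $v_n=Xu_n/\|Xu_n\|$. Let $(s,t)$ with $s,t\ge0$ be the minimizing unit vector above, and take $z_n=(-s v_n,\,t u_n)$. Then
$$\|Tz_n\|^2=\|{-as}\,v_n+tXu_n\|^2+b^2t^2=(t\|Xu_n\|-as)^2+b^2t^2\ \longrightarrow\ q(s,t)=\sigma_{\min}^2 ,$$
where the middle equality holds because $v_n$ and $Xu_n$ are parallel. Thus $m(T)\le\sigma_{\min}$, and combined with the lower bound this gives equality.

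The cases $a=0$ or $b=0$ need no separate treatment: the determinant identity yields $\sigma_{\min}=0$, consistent with \eqref{minmod}, since the two square roots then coincide.
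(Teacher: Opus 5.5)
Your upper bound is correct. With $v_n=Xu_n/\|Xu_n\|$, the vectors $z_n=(-sv_n,\,tu_n)$ satisfy $\|Tz_n\|^2=(t\|Xu_n\|-as)^2+b^2t^2\to q(s,t)$. Your computation of $\sigma_{\min}$ for $M=\begin{bmatrix}a&-\|X\|\\0&b\end{bmatrix}$ from $\sigma_1^2+\sigma_2^2$ and $\sigma_1\sigma_2$ is also fine. The gap is in the \emph{lower bound}: the displayed inequality $\|ax+Xy\|^2\ge(a\|x\|-\|X\|\|y\|)^2$ is false whenever $a\|x\|<\|X\|\|y\|$. The reverse triangle inequality only gives $\|ax+Xy\|\ge\bigl|a\|x\|-\|Xy\|\bigr|$. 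On that side of the absolute value you cannot replace $\|Xy\|$ by the larger quantity $\|X\|\|y\|$. For a concrete failure, take $a=b=1$, any $X$ with $\|X\|=1$ and $N(X)\neq\{0\}$, $x=0$, and $y$ a unit vector in $N(X)$. Then $\|T(x,y)\|^2=1$, while your right-hand side equals $2$. So the inequality $m(T)^2\ge\min q$ is not established as written. This is exactly the direction where vectors $y$ with $\|Xy\|$ far below $\|X\|\|y\|$ must be controlled.

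The conclusion is still true, and you can repair the step. Let $s=\|x\|$ and $t=\|y\|$.
If $as\ge\|X\|t$, then $\|ax+Xy\|\ge as-\|Xy\|\ge as-\|X\|t\ge0$, so your inequality does hold in that region.
If $as<\|X\|t$, then $t>t_0:=a/\sqrt{a^2+\|X\|^2}$. Put $s_0=\|X\|/\sqrt{a^2+\|X\|^2}$, so that $as_0=\|X\|t_0$. Then $\|T(x,y)\|^2\ge b^2t^2\ge b^2t_0^2=q(s_0,t_0)\ge\sigma_{\min}^2$.
Alternatively, keep $c=\|Xy\|/t\in[0,\|X\|]$ as a free parameter, bound below by $\sigma_{\min}$ of $\begin{bmatrix}a&-c\\0&b\end{bmatrix}$, and use that $c\mapsto\sqrt{c^2+(a+b)^2}-\sqrt{c^2+(a-b)^2}$ is nonincreasing on $[0,\infty)$.

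With this repair, your argument is a valid alternative to the paper's. The paper applies the Schur complement to $T^*T-t^2I$ for $0<t<\min\{|a|,|b|\}$. Positivity then reduces to the operator inequality $(|a|^2-t^2)(|b|^2-t^2)I-t^2X^*X\ge0$, which is equivalent to the scalar condition $(|a|^2-t^2)(|b|^2-t^2)\ge t^2\|X\|^2$. Combined with $m(T)^2=\inf\sigma(T^*T)$, this settles both directions at once, so it needs neither approximate norming vectors nor the case split above. Your route is more elementary and makes the near-extremal vectors explicit.
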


\begin{proof}
If $a=0$, then for any $x\in H_1$, $T(x,0)=(0,0)$, and hence $m(T)=0$. Similarly, if $b=0$, then $m(T)=0$. In both cases, the right hand side of \eqref{minmod} is also equal to $0$. Thus we may assume that $a\neq 0$ and $b\neq 0$.

If $X=0$, then $T=aI_{H_1}\oplus bI_{H_2}$. In this case, $m(T)=\min\{|a|,|b|\}$, which agrees with \eqref{minmod}.

Next, assume that $X\neq0$. Let $t>0$. Then
$$
T^*T-t^2I_{H_1\oplus H_2}=
\begin{bmatrix}
(|a|^2-t^2)I_{H_1} & \overline{a}X\\
aX^* & (|b|^2-t^2)I_{H_2}+X^*X
\end{bmatrix}.
$$
Suppose that $0<t<\min\{|a|,|b|\}$. Then $(|a|^2-t^2)I_{H_1}$ is positive and invertible. Hence by the Schur complement criterion \cite[Theorem 2.4.1, Page 55]{Osaka},
$$
T^*T-t^2I_{H_1\oplus H_2}\geq0 \iff (|b|^2-t^2)I_{H_2}+X^*X-\frac{|a|^2}{|a|^2-t^2}X^*X\geq0.
$$
Equivalently, \begin{equation}\label{scinq}
(|a|^2-t^2)(|b|^2-t^2)I_{H_2}-t^2X^*X\geq0.
\end{equation}
Since $X^*X$ is a positive operator, \eqref{scinq} holds if and only if 
\begin{equation}\label{scinq1}
(|a|^2-t^2)(|b|^2-t^2)\geq t^2\|X\|^2.
\end{equation}
Since $m(T)^2=\inf\sigma(T^*T)$ by \cite[Theorem 1]{Bouldin}, we obtain
$$
T^*T-t^2I_{H_1\oplus H_2}\geq0
\quad\iff\quad
t\leq m(T).
$$
It is easy to see that $m(T)\le\min\{|a|,|b|\}$. Indeed, for a unit vector
$x\in H_1$ we have $T(x,0)=(ax,0)$, so $m(T)\le|a|$. For a unit vector $y\in H_2$, choose $w=(-a^{-1}Xy,\,y)$. Then $T w=(0,by)$ and $\|w\|\ge\|y\|=1$,
hence $m(T)\le \|Tw\|/\|w\|\le|b|$.

\smallskip
\noindent If $m(T)=\min\{|a|,|b|\}$, then $(|a|^2-m(T)^2)(|b|^2-m(T)^2)=0$, while $m(T)^2\|X\|^2>0$, which contradicts \eqref{scinq1}. Thus $m(T)<\min\{|a|,|b|\}$.

For $0<t\leq m(T)$, define
$$
g(t):=(|a|^2-t^2)(|b|^2-t^2)-t^2\|X\|^2.
$$
Since $m(T)$ is the largest value of $t$ for which $g(t)\geq0$ and $g$ is strictly decreasing continuous function on $(0,m(T)]$, it follows that $g(m(T))=0$. Then we obtain
$$
m(T)^4-\left(|a|^2+|b|^2+\|X\|^2\right)m(T)^2+|a|^2|b|^2
=0.
$$
Therefore,
$$
m(T)^2=\frac{1}{2}\left[|a|^2+|b|^2+\|X\|^2\pm
\sqrt{\left(|a|^2+|b|^2+\|X\|^2\right)^2-
4|a|^2|b|^2}\right].
$$
Since $m(T)^2<\min\{|a|^2,|b|^2\}$, we take the smaller root. Also observe that 
$$
\left(\|X\|^2+(|a|+|b|)^2\right)
\left(\|X\|^2+(|a|-|b|)^2\right)
=\left(\|X\|^2+|a|^2+|b|^2\right)^2-4|a|^2|b|^2.
$$
Hence 
$$
m(T)^2
=
\frac{1}{4}
\left[
\left(\|X\|^2+(|a|+|b|)^2\right)^{\frac{1}{2}}
-\left(\|X\|^2+(|a|-|b|)^2\right)^{\frac{1}{2}}
\right]^2.
$$
Since $m(T)\geq0$, taking the positive square root, we obtain
$$
m(T)=\frac{1}{2}
\left[\left(\|X\|^2+(|a|+|b|)^2\right)^{\frac{1}{2}}
-\left(\|X\|^2+(|a|-|b|)^2\right)^{\frac{1}{2}}\right].
$$
This proves the result.
\end{proof}

\begin{proposition}\label{reduced minmod}
Let $V_j\in\clb(H_j)$ be an isometry for $j=1,2$, and  $T\in\clb(H_2,H_1)$. Then the reduced minimum modulus of $\Fog=
\begin{bmatrix}
V_1^* & T\\
0 & V_2
\end{bmatrix}$ is given by
\begin{equation}\label{rm:Fog}
\gamma(\Fog)=
\frac{\sqrt{\|T\|^2+4}-\|T\|}{2}.
\end{equation}
\end{proposition}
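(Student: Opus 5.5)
The plan is to compute $\gamma(\Fog)$ via the Moore--Penrose inverse, using the identity $\gamma(\Fog)=1/\|\Fog^\dagger\|$ recalled in the preliminaries (valid since $\Fog\neq 0$). First I show that $\Fog$ has closed range. By the previous proposition, $N(\Fog)=N(V_1^*)\oplus\{0\}$, so $N(\Fog)^\perp=R(V_1)\oplus H_2$. For $(x,y)\in N(\Fog)^\perp$, write $x=V_1u$. Then
\[
\|\Fog(x,y)\|^2=\|u+Ty\|^2+\|y\|^2 .
\]
Moreover, the map $(u,y)\mapsto(V_1u,y)$ is a unitary from $H_1\oplus H_2$ onto $N(\Fog)^\perp$. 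Hence $\gamma(\Fog)=m(G)$, where
\[
G=\begin{bmatrix} I_{H_1} & T\\ 0 & I_{H_2}\end{bmatrix}\quad\text{on } H_1\oplus H_2 .
\]
This reduction is the key step. I would verify it directly from the identity $\Fog\,(V_1\oplus I)=\begin{bmatrix}I&T\\0&V_2\end{bmatrix}$ and the fact that $V_2$ is isometric, so $\|(I\oplus V_2)w\|=\|w\|$.

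Next I apply Lemma \ref{minmod:quadratic} with $a=b=1$ and $X=T$. This gives
\[
m(G)=\tfrac12\left[\sqrt{\|T\|^2+4}-\|T\|\right],
\]
which is exactly \eqref{rm:Fog}. As a consistency check, this value equals $1/\|\Fog\|$ by Proposition \ref{normformula}. It also matches the fact that $G^{-1}=\begin{bmatrix}I&-T\\0&I\end{bmatrix}$ is a Foguel-type operator with $V_1=V_2=I$, whose norm is $\frac{\|T\|+\sqrt{\|T\|^2+4}}{2}$ by Proposition \ref{normformula}. Thus $m(G)=1/\|G^{-1}\|$ gives the same formula even without the lemma.

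The only delicate point is the identification of $N(\Fog)^\perp$ together with the unitary change of variables. In particular, one must check that the restriction of $\Fog$ to $N(\Fog)^\perp$ is unitarily equivalent, up to an isometry on the output side, to $G$. The case $T=0$ is covered by the same argument, since then $G=I$ and $\gamma(\Fog)=1$.
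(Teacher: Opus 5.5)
Your argument is correct and is essentially the paper's proof: you use the unitary $(u,y)\mapsto(V_1u,y)$ onto $N(\Fog)^\perp=R(V_1)\oplus H_2$, the factorization $\Fog(V_1\oplus I)=(I_{H_1}\oplus V_2)G$ to get $\gamma(\Fog)=m(G)$, and then Lemma~\ref{minmod:quadratic} with $a=b=1$, $X=T$. Your opening announcement of a Moore--Penrose route is never actually used and can be dropped, while your alternative check via $m(G)=1/\|G^{-1}\|$ (Proposition~\ref{normformula} applied to $G^{-1}$ with $V_1=V_2=I$ and corner $-T$) is a valid way to bypass the lemma.
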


\begin{proof}
We have $N(\Fog)=N(V_1^*)\oplus\{0\}.$ Consequently,
$N(\Fog)^\perp=N(V_1^*)^\perp\oplus H=\overline{R(V_1)}\oplus H_2$. Since $V_1$ is an isometry, $R(V_1)$ is closed. Thus $N(\Fog)^\perp=R(V_1)\oplus H_2$.

\smallskip
Now define
$U:H_1\oplus H_2\longrightarrow R(V_1)\oplus H_2$
by $U(x,y)=(V_1x,y)$ for $(x,y)\in H_1\oplus H_2$. Observe that $\|U(x,y)\|^2=\|V_1x\|^2+\|y\|^2=\|x\|^2+\|y\|^2$ for every $(x,y)\in H_1\oplus H_2$. Thus $U$ is an isometry. Moreover, if $(z,y)\in R(V_1)\oplus H_2$, then $z=V_1x$ for
some $x\in H_1$. Therefore, $U(x,y)=(z,y)$, and so $U$ is onto. Hence $U$ is unitary. 

For $(x,y)\in H_1\oplus H_2$, we have
$$
\Fog U(x,y)=\Fog(V_1x,y)=\bigl(V_1^*V_1x+Ty,V_2y\bigr)=(x+Ty,V_2y).
$$
Therefore, we can write $\Fog U=
\begin{bmatrix}
I_{H_1} & T\\
0 & V_2
\end{bmatrix}$. Since $U$ is unitary, the above identity gives
$$
\gamma(\Fog)=
\inf_{\substack{z\in N(\Fog)^\perp,\\\|z\|=1}}
\|\Fog z\|=\inf_{\substack{(x,y)\in H_1\oplus H_2,\\
\|(x,y)\|=1}}
\|\Fog U(x,y)\|=m\left(\begin{bmatrix}
    I_{H_1} & T \\
    0 & V_2
\end{bmatrix}\right).
$$
Observe that $\begin{bmatrix}
    I_{H_1} & T \\
    0 & V_2
\end{bmatrix}=\begin{bmatrix}
    I_{H_1} & 0 \\
    0 & V_2
\end{bmatrix}\begin{bmatrix}
    I_{H_1} & T \\
    0 & I_{H_2}
\end{bmatrix}$, where $\begin{bmatrix}
    I_{H_1} & 0 \\
    0 & V_2
\end{bmatrix}$ is an isometry. Consequently, we get $m\left(\begin{bmatrix}
    I_{H_1} & T \\
    0 & V_{_2}
\end{bmatrix}\right)=m\left(\begin{bmatrix}
    I_{H_1} & T \\
    0 & I_{H_2}
\end{bmatrix}\right)$. By Lemma \ref{minmod:quadratic}, it follows that $\gamma(\Fog)=
\dfrac{\sqrt{\|T\|^2+4}-\|T\|}{2}$. This completes the proof.  
\end{proof}

\begin{remark}\label{closed range and unitary V_1}
Let $V_j\in\clb(H_j)$ be an isometry for $j=1,2$, and $T\in\clb(H_2,H_1)$. Then $\Fog=\begin{bmatrix}
    V_1^{*} & T \\
    0 & V_2
\end{bmatrix}$ always has closed range. If $V_1$ is unitary, then $m(\Fog)=\gamma(\Fog)$.
Moreover, $\Fog$ is non-injective if and only if
$V_1$ is non-unitary. In this case, $\Fog$ is always minimum attaining.
\end{remark}

\begin{corollary}
 Let $\Fog$ be the Foguel-type operator as in Proposition \ref{reduced minmod}. Then the reduced minimum modulus of $\Fog$ is given by $\gamma(\Fog)=\dfrac{1}{\|\Fog\|}$.
\end{corollary}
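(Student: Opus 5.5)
The plan is to combine the two explicit formulas already established: Proposition \ref{normformula}, together with part (ii) of the remark following it, gives
$$
\|\Fog\|=\frac{\|T\|+\sqrt{\|T\|^2+4}}{2},
$$
and Proposition \ref{reduced minmod} gives
$$
\gamma(\Fog)=\frac{\sqrt{\|T\|^2+4}-\|T\|}{2}.
$$
Both formulas hold under exactly the hypotheses of the corollary ($V_1,V_2$ isometries, $T$ arbitrary). So it only remains to check that these two numbers are reciprocals.

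For this, multiply them:
$$
\frac{\bigl(\sqrt{\|T\|^2+4}+\|T\|\bigr)\bigl(\sqrt{\|T\|^2+4}-\|T\|\bigr)}{4}=\frac{(\|T\|^2+4)-\|T\|^2}{4}=1.
$$
Since $\|\Fog\|\geq 1>0$, dividing gives $\gamma(\Fog)=1/\|\Fog\|$. The case $T=0$ needs no separate treatment: both sides equal $1$, as the formulas specialise correctly.

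An alternative route would use the preliminaries fact $\gamma(\Fog)=1/\|\Fog^{\dagger}\|$. It would then remain to show $\|\Fog^{\dagger}\|=\|\Fog\|$, via the Moore--Penrose inverse computed later in the paper. That is unnecessary here, since the direct computation above suffices. There is no real obstacle; the only point to watch is that both earlier formulas are applied under the same standing assumptions on $V_1,V_2$.
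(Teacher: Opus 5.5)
Your proposal is correct and matches the paper's proof. The paper likewise applies Proposition \ref{normformula}, using that $V_2$ is an isometry, together with the formula for $\gamma(\Fog)$ from Proposition \ref{reduced minmod}, and multiplies the two closed forms to obtain $1$.
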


\begin{proof}
Since $V_1$ and $V_2$ are isometries, $\|V_2x\|=\|x\|$ for every $x\in H_2$. Thus
Proposition \ref{normformula} gives
$$
\|\Fog\|=\frac{\|T\|+\sqrt{\|T\|^2+4}}{2}.
$$
Multiplying this by the expression for $\gamma(\Fog)$ in \eqref{rm:Fog} yields
$$
\|\Fog\|\,\gamma(\Fog)
=\frac{\|T\|+\sqrt{\|T\|^2+4}}{2}\cdot\frac{\sqrt{\|T\|^2+4}-\|T\|}{2}
=\frac{(\|T\|^2+4)-\|T\|^2}{4}=1.
$$
Hence $\gamma(\Fog)=\dfrac{1}{\|\Fog\|}$.
\end{proof}

Next, we give the characterization of reduced minimum attainment of Foguel-type operators. Recall that an operator $A\in \clb(H_1,H_2)$ is said to be \emph{reduced minimum attaining} if there exists a unit vector $z\in N(A)^{\perp}$ such that $\|Az\|=\gamma(A)$.

\begin{theorem}\label{thm:RMA}
Let $V_j\in \clb(H_j)$ be an isometry for $j=1,2$, and $T\in \clb(H_2,H_1)$. Consider the Foguel-type operator $\Fog=\begin{bmatrix}
V_1^* & T \\
0 & V_2
\end{bmatrix}$ on $H_1\oplus H_2$.
Then the following are equivalent:
\begin{enumerate}
\item $\Fog$ attains its reduced minimum modulus;
\item $T\in\mathcal N(H_2,H_1)$.
\end{enumerate}
\end{theorem}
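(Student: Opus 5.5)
The plan is to pass to the Moore--Penrose inverse and reduce to Theorem \ref{NA Foguel}. By \cite[Theorem 3.2]{SHK-GR2020}, recalled in the preliminaries, $\Fog$ (which is nonzero and has closed range by Remark \ref{closed range and unitary V_1}) attains its reduced minimum modulus if and only if $\Fog^{\dagger}$ is norm attaining. So it suffices to compute $\Fog^{\dagger}$ and show that it is norm attaining exactly when $T$ is.

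First I will identify the candidate
$$
G=\begin{bmatrix} V_1 & -V_1TV_2^{*}\\ 0 & V_2^{*}\end{bmatrix}
$$
and check that $G=\Fog^{\dagger}$. The quickest route uses the characterization recalled in the preliminaries: $\Fog^{\dagger}y$ is the unique $z\in N(\Fog)^{\perp}=R(V_1)\oplus H_2$ with $\Fog z=P_{R(\Fog)}y$. Equivalently, I will verify $\Fog G=P_{R(\Fog)}$, $G\Fog=P_{N(\Fog)^{\perp}}$ and $R(G)\subseteq N(\Fog)^{\perp}$. Direct multiplication gives
$$
\Fog G=\begin{bmatrix} I & 0\\ 0 & V_2V_2^{*}\end{bmatrix}
\qquad\text{and}\qquad
G\Fog=\begin{bmatrix} V_1V_1^{*} & 0\\ 0 & I\end{bmatrix},
$$
using $V_1^{*}V_1=I$ and $V_2^{*}V_2=I$. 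Both products are orthogonal projections. The range of the first is $H_1\oplus R(V_2)$, and this equals $R(\Fog)$: indeed $R(\Fog)\supseteq \Fog(R(V_1)\oplus H_2)$, which contains $(x+Ty,V_2y)$ for all $x,y$, and hence all of $H_1\oplus R(V_2)$. The range of the second is $R(V_1)\oplus H_2=N(\Fog)^{\perp}$. Together these give all four Penrose equations.

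Next I will show that $G$ is norm attaining if and only if $T$ is. The idea is to bring $G$ to Foguel form by unitary and adjoint manipulations. One option is $G^{*}=\begin{bmatrix} V_1^{*} & 0\\ -V_2T^{*}V_1^{*} & V_2\end{bmatrix}$; swapping the two summands via the flip unitary $H_1\oplus H_2\to H_2\oplus H_1$ turns this into $\begin{bmatrix} V_2 & -V_2T^{*}V_1^{*}\\ 0 & V_1^{*}\end{bmatrix}$, which is not of Foguel form. The cleaner route is to factor
$$
G=\begin{bmatrix} V_1 & 0\\ 0 & I\end{bmatrix}\begin{bmatrix} I & -TV_2^{*}\\ 0 & V_2^{*}\end{bmatrix}.
$$
The left factor is an isometry, so $\|Gz\|=\|G_0z\|$ for every $z$, where $G_0$ denotes the right factor; hence $G$ is norm attaining iff $G_0$ is. Then $G_0^{*}=\begin{bmatrix} I & 0\\ -V_2T^{*} & V_2\end{bmatrix}$, and by Lemma \ref{lem:PP} $G_0$ is norm attaining iff $G_0^{*}$ is. After the flip, $G_0^{*}$ is unitarily equivalent to $\begin{bmatrix} V_2 & -V_2T^{*}\\ 0 & I\end{bmatrix}=\begin{bmatrix} V_2 & 0\\ 0 & I\end{bmatrix}\begin{bmatrix} I & -T^{*}\\ 0 & I\end{bmatrix}$. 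Stripping the isometric left factor again reduces the question to the norm attainment of $M=\begin{bmatrix} I & -T^{*}\\ 0 & I\end{bmatrix}$ on $H_1\oplus H_2$.

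This last operator is itself a Foguel-type operator, with $V_1=I_{H_2}$ (which is unitary, so $V_1^{*}=I$), $V_2=I_{H_1}$, and corner entry $-T^{*}$. Theorem \ref{NA Foguel} therefore says $M$ is norm attaining iff $-T^{*}$ is, and by Lemma \ref{lem:PP} this holds iff $T$ is. Chaining the equivalences proves the theorem. The main obstacle is the careful bookkeeping: the isometric factors must sit on the left, since $\|Wz\|=\|z\|$ only for left multiplication by an isometry $W$, and adjoints must be taken at the right moments, each time justified by Lemma \ref{lem:PP}.
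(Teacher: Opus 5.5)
Your proof is correct, but it reaches the norm side by a different route from the paper.

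You pass to $\Fog^{\dagger}$ at the outset, using the closed-range duality \cite[Theorem 3.2]{SHK-GR2020} for the non-invertible operator $\Fog$. You then compute $\Fog^\dagger$ explicitly and reduce it by stripping isometries, taking an adjoint and flipping.

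The paper never forms $\Fog^\dagger$ in this proof. It uses the unitary $U(x,y)=(V_1x,y)$ of $H_1\oplus H_2$ onto $N(\Fog)^\perp=R(V_1)\oplus H_2$ and writes $\Fog U=(I_{H_1}\oplus V_2)E$ with $E=\begin{bmatrix} I & T\\ 0 & I\end{bmatrix}$. This gives $\gamma(\Fog)=m(E)$, and reduced minimum attainment of $\Fog$ becomes minimum attainment of the invertible operator $E$. Only then does the paper switch to norms, via the elementary invertible case ($E$ attains $m(E)$ iff $E^{-1}$ attains $\|E^{-1}\|$). A diagonal unitary conjugation and Theorem \ref{NA Foguel} finish.

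The two arguments are the same factorization read in different orders. Your $\Fog^\dagger=(V_1\oplus I)G_0$ is exactly $U\,E^{-1}(I_{H_1}\oplus V_2^*)$, since $G_0=E^{-1}(I_{H_1}\oplus V_2^*)$.

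What each order buys:
\begin{itemize}
\item The paper's order is self-contained: it needs no Moore--Penrose theory, and the identity $\gamma(\Fog)=m(E)$ ties in with Proposition \ref{reduced minmod}.
\item Your order yields the formula for $\Fog^\dagger$ (Theorem \ref{pseudo}, which the paper proves later). It also proves directly the later equivalence ``$\Fog^\dagger$ norm attaining iff $T$ norm attaining''.
\end{itemize}

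Your adjoint-and-flip step can be shortened. $I_{H_1}\oplus V_2^*$ is a co-isometry, so it maps the closed unit ball onto itself; hence $G_0$ is norm attaining iff $E^{-1}$ is.

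Two small slips:
\begin{itemize}
\item $M$ acts on $H_2\oplus H_1$, not on $H_1\oplus H_2$.
\item When identifying $R(\Fog G)$ with $R(\Fog)$, you prove only the automatic inclusion. The one actually needed is $R(\Fog)\subseteq H_1\oplus R(V_2)$, which is immediate since the second coordinate of $\Fog(x,y)$ is $V_2y$.
\end{itemize}
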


\begin{proof}
As in Proposition \ref{reduced minmod}, $N(\Fog)^\perp=R(V_1)\oplus H_2$, and
$U : H_1\oplus H_2\to R(V_1)\oplus H_2$ given by $U(x,y)=(V_1x,y)$ is unitary. For $(x,y)\in H_1\oplus H_2$,
$$
\Fog U(x,y)=(V_1^*V_1x+Ty,\,V_2y)=(x+Ty,\,V_2y),
$$
that is,
$$
\Fog U=\begin{bmatrix}
I_{H_1} & 0\\
0 & V_2
\end{bmatrix}G, \text{ where } G:=\begin{bmatrix}
I_{H_1} & T\\
0 & I_{H_2}
\end{bmatrix}.
$$
Since the operator $I_{H_1}\oplus V_2$ is an isometry, we get $\|\Fog Uw\|=\|Gw\|$ for every $w\in H_1\oplus H_2$. As $U$ maps unit vectors of $H_1\oplus H_2$ onto unit vectors of $N(\Fog)^\perp$, we get
$$
\gamma(\Fog)=\inf_{\|w\|=1}\|Gw\|=m(G).
$$
Hence $\Fog$ attains $\gamma(\Fog)$ if and only if $G$ attains $m(G)$. Now $G$ is invertible and $G^{-1}=
\begin{bmatrix}
I_{H_1} & -T \\
0 & I_{H_2}
\end{bmatrix}$. Thus $G$ attains its minimum modulus if and only if $G^{-1}$ attains its norm.

Let $D=\operatorname{diag}(I_{H_1},-I_{H_2})$, which is unitary. Then $G^{-1}=DGD$, and hence $G^{-1}$ attains its norm if and only if $G$ attains its norm. Therefore, $G$ attains its minimum modulus if and only if $G$ attains its norm.  

\smallskip
By Theorem \ref{NA Foguel}, it follows that $G$ attains its norm if and only if $T$ is norm attaining. This completes the proof.
\end{proof}

In the following example, we observe that $\Fog$ may not be reduced minimum attaining even if $T$ attains its minimum or reduced minimum.

\begin{example}
    Let $H=\ell^2(\N)$ and $T=\operatorname{diag}(\lambda_n)$ with $\lambda_n=2-\frac{1}{n}$ for $n\geq 1$.
Then $T$ is injective with
$m(T)=\gamma(T)=1=\|Te_1\|$. Thus $T$ attains both its minimum modulus and its
reduced minimum modulus. But $\|T\|=2$ and $\|Tx\|<2\|x\|$ for all nonzero $x\in H$, and hence $T\notin \na(H)$.
By Theorem~\ref{thm:RMA}, $\Fog=\begin{bmatrix}
    V^* & T \\
    0 & V
\end{bmatrix}$ does not attain $\gamma(\Fog)$
for any isometry $V$. This example shows that $\Fog$ may not attain its reduced minimum modulus even if $T$ is minimum or reduced minimum attaining.
\end{example}

Now we provide a complete characterization of $2\times 2$ positive block operator matrix to be $\am$.

\begin{theorem}\label{AM block iff}
       Let $T=
\begin{bmatrix}
A_1 & X\\
X^* & A_2
\end{bmatrix}
\in \clb(H_1\oplus H_2)_+.
$
Then the following are equivalent:

\begin{enumerate}
\item $T\in \am(H_1\oplus H_2)$;

\item $A_j\in \am(H_j)_+$ with
$\sigma_{\rm ess}(A_1)=\sigma_{\rm ess}(A_2)=\{\alpha\}$
for some $\alpha\ge0$, and
$$X=F- (A_1^{\alpha-}+F_1)^{\frac{1}{2}}\,C\,(A_2^{\alpha-}+F_2)^{\frac{1}{2}},$$ where $C$ is a contraction, $F\in\clf(H_2,H_1)$ and $F_j\in\clf(H_j)_{sa}$ such that $A_j^{\alpha-}+F_j\in \clk(H_j)_+$.
\end{enumerate}
\end{theorem}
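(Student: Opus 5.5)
The plan is to mirror the proof of Theorem \ref{AN block iff}, with the structure theorem for positive $\mathcal{AN}$-operators replaced by its $\mathcal{AM}$ analogue. That analogue is the known characterization of positive absolutely minimum attaining operators (see \cite{AM Cha}, and the structure results in \cite{AN Closure}). It says that $T\in\am(H)_+$ if and only if $T=\alpha I-K+F$ for some $\alpha\ge0$, $K\in\clk(H)_+$ and $F\in\clf(H)_+$. Equivalently, $\es(T)=\{\alpha\}$, $(T-\alpha I)^{-}$ is compact and $(T-\alpha I)^{+}$ has finite rank. Compared with the $\mathcal{AN}$ case, the roles of the positive and negative parts are simply swapped. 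This is exactly why the statement involves $A_j^{\alpha-}$ and a minus sign in front of the compact factorized term.

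\textbf{(i)$\implies$(ii).} We write $T=\alpha I-K+F$ with $K\ge0$ compact and $F\ge0$ of finite rank, and decompose $K=[K_{ij}]$ and $F=[F_{ij}]$ into blocks. Positivity of $K$ gives $K_{12}=K_{11}^{1/2}CK_{22}^{1/2}$ for some contraction $C$. Comparing blocks then yields $A_j=\alpha I-K_{jj}+F_{jj}$ and $X=F_{12}-K_{12}$. From the first identity, the characterization gives $A_j\in\am(H_j)_+$, and Weyl's theorem gives $\es(A_j)=\{\alpha\}$. Next, by the $\mathcal{AM}$ version of \cite[Theorem 4.1]{AN Closure}, we have $A_j-\alpha I=A_j^{\alpha+}-A_j^{\alpha-}$ with $A_j^{\alpha+}$ of finite rank and $A_j^{\alpha-}$ compact. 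Hence
\[
K_{jj}=A_j^{\alpha-}+\bigl(F_{jj}-A_j^{\alpha+}\bigr)=A_j^{\alpha-}+F_j,
\]
where $F_j:=F_{jj}-A_j^{\alpha+}$ is self-adjoint of finite rank. Moreover $A_j^{\alpha-}+F_j=K_{jj}$ is positive and compact. Substituting into the formula for $X$ gives the asserted form, with $F:=F_{12}$.

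\textbf{(ii)$\implies$(i).} We put $\widetilde K_j=A_j^{\alpha-}+F_j\in\clk(H_j)_+$ and form
\[
\widetilde K=\begin{bmatrix}\widetilde K_1&\widetilde K_1^{1/2}C\widetilde K_2^{1/2}\\ \widetilde K_2^{1/2}C^*\widetilde K_1^{1/2}&\widetilde K_2\end{bmatrix}.
\]
By the block positivity criterion, $\widetilde K$ is positive, and it is compact. A direct check then gives $T=\alpha I-\widetilde K+\widetilde F$, where
\[
\widetilde F=\begin{bmatrix}A_1^{\alpha+}+F_1&F\\F^*&A_2^{\alpha+}+F_2\end{bmatrix}
\]
is self-adjoint of finite rank. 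Here $A_j^{\alpha+}$ has finite rank because $A_j\in\am(H_j)_+$ with $\es(A_j)=\{\alpha\}$.

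\textbf{Main obstacle.} The characterization of positive $\mathcal{AM}$-operators is stated with a \emph{positive} finite-rank part, whereas $\widetilde F$ is only self-adjoint. To deal with this, split $\widetilde F=\widetilde F^{+}-\widetilde F^{-}$ and absorb $\widetilde F^{-}$ into the compact part. The operator $\widetilde K+\widetilde F^{-}$ is positive and compact, and $\widetilde F^{+}$ is positive of finite rank. This puts $T$ in the required form $\alpha I-K'+F'$, and hence $T\in\am(H_1\oplus H_2)$. Alternatively, one can argue directly from the spectral picture. Since $T\ge0$ and $T$ is a finite-rank perturbation of $\alpha I-\widetilde K$, Weyl's theorem gives $\es(T)=\{\alpha\}$. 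The spectrum of $T$ above $\alpha$ then consists of finitely many eigenvalues of finite multiplicity, because a finite-rank perturbation of $\alpha I-\widetilde K\le\alpha I$ has only finite-rank spectral projection on $(\alpha,\infty)$. This is exactly the spectral condition characterizing $\am(H)_+$. Throughout, the case $\alpha=0$ must be checked carefully: there $T\ge0$ forces $\widetilde K$ to be of finite rank modulo a positive part, and the degenerate situation should be consistent with the $\mathcal{AM}$ characterization.
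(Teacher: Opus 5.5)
Your proposal is correct and follows the paper's proof essentially step for step. It uses the same block decomposition of $T=\alpha I-K+F$, the same identification $K_{jj}=A_j^{\alpha-}+F_j$ via $A_j-\alpha I=A_j^{\alpha+}-A_j^{\alpha-}$, and the same operators $\widetilde K$ and $\widetilde F$ for (ii)$\implies$(i). The only difference is in the last step: the paper cites \cite[Proposition 3.5]{AN Toeplitz}, which already allows a self-adjoint finite-rank part, while you either absorb $\widetilde F^{-}$ into the compact part or use $T-\alpha I\le\widetilde F$ together with the spectral characterization; both of your routes are valid.
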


\begin{proof}
(i)$\implies$(ii): Let $T\in \am(H_1\oplus H_2)$. Then by \cite[Theorem 5.8]{AM Cha}, we have $T=\alpha I_{H_1 \oplus H_2} -K +F$ for some $\alpha \geq 0$, $K\in \clk(H_1 \oplus H_2)_{+}$ and $F\in \clf(H_1 \oplus H_2)_{+}$. Let $K=\begin{bmatrix}
        K_{11} & K_{12} \\
        K_{21} & K_{22}
    \end{bmatrix}$ and $F=\begin{bmatrix}
        F_{11} & F_{12} \\
        F_{21} & F_{22}
    \end{bmatrix}$. Since $K,F \geq 0$, we have $K_{jj},F_{jj}\geq 0$ for $j=1,2 ;$ $K_{21}=K^*_{12}$ and $F_{21}=F^*_{12}$. Also, there exists a contraction $C$ such that $K_{12}=K_{11}^\frac{1}{2}CK_{22}^\frac{1}{2}$.    Then
    \begin{equation*}
        \begin{split}
            T&=\begin{bmatrix}
                \alpha I_{H_1} - K_{11} + F_{11} & -K_{12}+F_{12}\\
                -K^*_{12}+F^*_{12} & \alpha I_{H_2} - K_{22}+F_{22}
            \end{bmatrix}.
        \end{split}
    \end{equation*}
    Thus  $A_j =\alpha I_{H_j} - K_{jj} + F_{jj}$ for $j=1,2$ and $X=F_{12}-K_{12}$. By \cite[Theorem 6.2]{AN Closure}, we have 
    $$A_j-\alpha I_{H_j}=(A_j-\alpha I_{H_j})^+ - (A_j-\alpha I_{H_j})^-,$$ where $A_j^{\alpha+}=(A_j-\alpha I_{H_j})^+\in\clf(H_j)_+$ and $A_j^{\alpha-}=(A_j-\alpha I_{H_j})^-\in \clk(H_j)_+$. Therefore, we get 
    $$
    F_{jj}-K_{jj}= (A_j-\alpha I_{H_j})^+ - (A_j-\alpha I_{H_j})^-.
    $$
    Write $K_{jj}=(A_j-\alpha I_{H_j})^- + F_j \in \clk(H_j)_+$, where $F_j=F_{jj}- (A_j-\alpha I_{H_j})^+\in\clf(H_j)_{sa}$. Consequently, $K_{12}=(A_1^{\alpha-}+F_1)^{\frac{1}{2}}\,C\,(A_2^{\alpha-}+F_2)^{\frac{1}{2}}$. Writing $F:=F_{12}\in\clf(H_2,H_1)$, we obtain $$X=F-(A_1^{\alpha-}+F_1)^{\frac{1}{2}}\,C\,(A_2^{\alpha-}+F_2)^{\frac{1}{2}}.$$

    \medskip
    (ii)$\implies$(i): Assume that (ii) holds. Since $A_j\in\am(H_j)_{+}$ with $\es(A_j)=\{\alpha\}$ for $j=1,2$, by \cite[Theorem 6.2]{AN Closure}, it follows that $$A_j=\alpha I_{H_j}+A_j^{\alpha+}-A_j^{\alpha-},$$
    where $A_j^{\alpha+}\in \clf(H_j)_{+}$ and $A_j^{\alpha-}\in \clk(H_j)_{+}$ for $j=1,2$. Also, given that $$X=F-(A_1^{\alpha-}+F_1)^{\frac{1}{2}}\,C\,(A_2^{\alpha-}+F_2)^{\frac{1}{2}},$$ where $C$ is a contraction, $F\in\clf(H_2,H_1)$ and $F_j\in\clf(H_j)_{sa}$ such that $\widetilde{K}_j:=A_j^{\alpha-}+F_j\in \clk(H_j)_{+}$.
\smallskip

    Now observe that $$\widetilde{K}:=\begin{bmatrix}
        \widetilde{K}_1 & \widetilde{K}^\frac{1}{2}_1 C \widetilde{K}^\frac{1}{2}_2\\
        \widetilde{K}^\frac{1}{2}_2 C^* \widetilde{K}^\frac{1}{2}_1 & \widetilde{K}_2
    \end{bmatrix}\in \clk(H_1\oplus H_2)_{+}$$ and $$\widetilde{F}:=\begin{bmatrix}
        A_1^{\alpha+}+F_1 & F\\
        F^* & A_2^{\alpha+}+F_2
    \end{bmatrix}\in \clf(H_1\oplus H_2)_{sa}.$$
 Then we can write $T$ by the following way 
 \begin{align*}
     T&=\begin{bmatrix}
         A_1 & X\\
         X^* & A_2
     \end{bmatrix}\\
     &=\begin{bmatrix}
         \alpha I_{H_1}+A_1^{\alpha+}-A_1^{\alpha-} & F-(A_1^{\alpha-}+F_1)^{\frac{1}{2}}\,C\,(A_2^{\alpha-}+F_2)^{\frac{1}{2}}\\
         F^*-(A_2^{\alpha-}+F_2)^{\frac{1}{2}}\,C^*\,(A_1^{\alpha-}+F_1)^{\frac{1}{2}} & \alpha I_{H_2}+A_2^{\alpha+}-A_2^{\alpha-}
     \end{bmatrix}\\
     &=\begin{bmatrix}
             \alpha I_{H_1} & 0 \\
             0 & \alpha I_{H_2}
         \end{bmatrix} + \begin{bmatrix}
        A_1^{\alpha+}+F_1 & F\\
        F^* & A_2^{\alpha+}+F_2
    \end{bmatrix}\\
    &-\begin{bmatrix}
        A_1^{\alpha-}+F_1 & (A_1^{\alpha-}+F_1)^{\frac{1}{2}}\,C\,(A_2^{\alpha-}+F_2)^{\frac{1}{2}}\\
    (A_2^{\alpha-}+F_2)^{\frac{1}{2}}\,C^*\,(A_1^{\alpha-}+F_1)^{\frac{1}{2}} & A_2^{\alpha-}+F_2
    \end{bmatrix}\\
    &= \alpha I_{H_1\oplus H_2} + \widetilde{F} - \widetilde{K}.
 \end{align*}
Thus by \cite[Proposition 3.5]{AN Toeplitz}, it follows that $T\in \am(H_1\oplus H_2).$
\end{proof}

Next, we provide the characterization of Foguel-type operators to be $\am$.

\begin{theorem}\label{AM Foguel}
Let $V_j\in\clb(H_j)$ be an isometry for $j=1,2$, and $T\in\clb(H_2,H_1)$.
Consider the Foguel-type operator $\Fog=\begin{bmatrix}
V_1^* & T\\
0 & V_2
\end{bmatrix}$ on $H_1\oplus H_2$.
Then the following are equivalent:
\begin{enumerate}
    \item $\Fog\in\am(H_1\oplus H_2)$;
    
    \item $\dim N(V_1^*)<\infty$ and $T$ is of finite rank.
\end{enumerate}
\end{theorem}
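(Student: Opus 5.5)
We will mirror the proof of Theorem \ref{AN Foguel}, replacing Theorem \ref{AN block iff} by Theorem \ref{AM block iff}. The first step is to pass to the positive operator
$$
\Fog^*\Fog=\begin{bmatrix} V_1V_1^* & V_1T\\ T^*V_1^* & T^*T+I_{H_2}\end{bmatrix}
=I_{H_1\oplus H_2}+\begin{bmatrix} -P_{N(V_1^*)} & V_1T\\ T^*V_1^* & T^*T\end{bmatrix}.
$$
Throughout we write $A_1=V_1V_1^*$, $A_2=T^*T+I_{H_2}$ and $X=V_1T$. We will use the standard fact from \cite{Carvajal2,AM Cha} that $S\in\am$ if and only if $S^*S\in\am_+$; this plays the role that \cite[Corollary 2.11]{VR} played in the $\an$ case.

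For (i)$\implies$(ii), assume $\Fog\in\am(H_1\oplus H_2)$, so that $\Fog^*\Fog\in\am(H_1\oplus H_2)_+$. Theorem \ref{AM block iff} then gives:
\begin{itemize}
\item $A_1,A_2\in\am_+$ with $\es(A_1)=\es(A_2)=\{\alpha\}$;
\item the off-diagonal entry has the form $X=F-(A_1^{\alpha-}+F_1)^{1/2}C(A_2^{\alpha-}+F_2)^{1/2}$, with $A_j^{\alpha-}+F_j$ compact.
\end{itemize}
Since $X$ is therefore compact, $T=V_1^*X$ is compact. Weyl's theorem then gives $\es(A_2)=\{1\}$, so $\alpha=1$.

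Next, if $\dim N(V_1^*)=\infty$, then $0$ would be an eigenvalue of infinite multiplicity of the projection $A_1$, forcing $0\in\es(A_1)$, which contradicts $\es(A_1)=\{1\}$. Hence $\dim N(V_1^*)<\infty$.

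The one genuinely different point is the final step. In the $\an$ case the key term $A_1^{\alpha+}$ vanished; here the relevant operator is $A_1^{1-}=(I-P_{N(V_1^*)}-I)^-=P_{N(V_1^*)}$, which does not vanish. It is, however, of finite rank by what we just proved. Consequently $A_1^{1-}+F_1$ is finite rank, and so is its square root, which makes $X$ finite rank. Then $T=V_1^*X$ is finite rank. I expect this to be the step needing care, namely remembering to first establish $\dim N(V_1^*)<\infty$ before concluding finite rank of $X$.

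As a backup for this direction, one could avoid the fine structure of Theorem \ref{AM block iff} entirely. Compress $\Fog^*\Fog$ to $H_2$ to get $T^*T+I$, which should again be $\am$ because $\am$ is inherited by such compressions of positive operators. Then invoke the fact that an $\am$ operator of the form $I+K$ with $K\geq0$ compact forces $K$ to have finite rank, since positive compact perturbations above the essential spectrum can carry only finitely many eigenvalues in the $\am$ class \cite[Theorem 6.2]{AN Closure}.

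For (ii)$\implies$(i), if $\dim N(V_1^*)<\infty$ and $T$ has finite rank, then $\Fog^*\Fog=I+\widetilde F$ with $\widetilde F$ self-adjoint of finite rank. This is of the form $\alpha I+\widetilde F-\widetilde K$ with $\widetilde K=0$, so \cite[Proposition 3.5]{AN Toeplitz} (as used at the end of Theorem \ref{AM block iff}) gives $\Fog^*\Fog\in\am_+$. Hence $\Fog\in\am(H_1\oplus H_2)$.
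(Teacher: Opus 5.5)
Your proposal is correct and follows the paper's proof. You pass to $\Fog^*\Fog$, use Theorem \ref{AM block iff} to get $\alpha=1$ and $\dim N(V_1^*)<\infty$, deduce that $X=V_1T$ and hence $T$ has finite rank, and for the converse note $\Fog^*\Fog=I+(\text{finite rank})$.

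The only real difference is which factor you use at the end. The paper observes $A_2^{1-}=(T^*T)^-=0$, so $(A_2^{1-}+F_2)^{1/2}=F_2^{1/2}$ is finite rank outright. Your use of $A_1^{1-}=P_{N(V_1^*)}$ is equally valid once $\dim N(V_1^*)<\infty$ is known. Your backup route, as stated, would still need a separate argument for $\dim N(V_1^*)<\infty$.
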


\begin{proof}
Since $V_2$ is an isometry, we have $$\Fog^*\Fog=\begin{bmatrix}
V_1V_1^* & V_1T\\
T^*V_1^* & T^*T+I_{H_2}
\end{bmatrix}.$$
Let $A_1=V_1V_1^*, A_2=T^*T+I_{H_2}$ and $X=V_1T$. Note that $V_1V_1^*$ is an orthogonal projection onto $R(V_1)$. Consequently, $V_1V_1^*=I-P_{N(V_1^*)}$, where $P_{N(V_1^*)}$ denotes the orthogonal projection onto
$N(V_1^*)$. Hence we can write
$$
\Fog^*\Fog=I_{H_1\oplus H_2}+\begin{bmatrix}
-P_{N(V_1^*)} & V_1T\\
T^*V_1^* & T^*T
\end{bmatrix}.
$$

\medskip
(i)$\implies$(ii): Assume that $\Fog$ is $\am$. Then by \cite[Theorem 5.14]{AM Cha}, $\Fog^*\Fog\in\am(H_1\oplus H_2)_+$. By the similar argument as in the $\an$-case and using Theorem \ref{AM block iff}, we can show that $\es(A_1)=\es(A_2)=\{1\}$, and  $N(V_1^*)$ is finite dimensional.

Again, by Theorem \ref{AM block iff}, we have $X=F-(A_1^{\alpha-}+F_1)^{\frac{1}{2}}\,C\,(A_2^{\alpha-}+F_2)^{\frac{1}{2}}$ for some contraction $C$, $F\in\clf(H_2,H_1)$ and $F_j\in\clf(H
_j)_{sa}$ such that $A_j^{\alpha-}+F_j\in \clk(H_j)_+$. 

Here $\alpha=1$ and $A_2^{\alpha-}=(A_2-\alpha I_{H_2})^{-}=(T^*T+I_{H_2}-I_{H_2})^{-}=0$. Consequently, $X=V_1T$ becomes a finite rank operator, and hence $T=V_1^*(V_1T)$ has finite rank.

\medskip
(ii)$\implies$(i): Assume that (ii) holds. Then by the similar argument as in $\an$ case and using \cite[Lemma 3.8]{Carvajal2}, \cite[Theorem 5.14]{AM Cha}, we obtain $\Fog\in\am(H_1\oplus H_2)$.
\end{proof}

\begin{example}
    Let $H=\ell^2(\N)$ with standard orthonormal basis $\{e_n\}_{n\geq 1}$, and $S$ is the unilateral shift defined by $Se_n=e_{n+1}$ for $n\in\N$. Then $S$ is an isometry with $N(S^*)=\C e_1$. We write $e_i\otimes e_j$ for the rank one operator $x\mapsto\langle x,e_j\rangle e_i$ on $H$.

    Let $T=e_1\otimes e_1$ and $\Fog=\begin{bmatrix}S^* & e_1\otimes e_1\\
    0 & S
    \end{bmatrix}$. Since $\dim N(S^*)=1$ and $T$ has rank one, so by Theorem \ref{AN Foguel} and Theorem \ref{AM Foguel}, it follows that $\Fog$ is $\an$ as well as $\am$.

    Here $\|T\|=1$, and consequently $\|\Fog\|=\frac{1+\sqrt{5}}{2}$, the golden ratio. Moreover, Proposition \ref{reduced minmod} gives $\gamma(\Fog)=\frac{\sqrt{5}-1}{2}$ and it is attained, by Theorem \ref{thm:RMA}.
\end{example}

\section{Fredholmness, Moore–Penrose inverse and Applications}
In this section, we study the Fredholm properties of Foguel-type operators and determine their Moore--Penrose inverses.
We characterize Fredholmness in terms of the dimensions of
the defect spaces of the two isometries and compute the
corresponding Fredholm index. We also obtain an explicit
formula for the Moore--Penrose inverse and establish a
connection between its norm attainment property, reduced
minimum attainment of the original operator, and norm
attainment of $T$. We conclude by applying our results to Toeplitz and Hankel operators on the Hardy space.
\begin{proposition}\label{fredholm foguel}
Let $V_j\in \clb(H_j)$ be an isometry for $j=1,2$, and $T\in \clb(H_2,H_1)$. Consider the
Foguel-type operator $\Fog=
\begin{bmatrix}
V_1^* & T\\
0 & V_2
\end{bmatrix}$ on $H_1\oplus H_2$.
Then $\Fog$ is Fredholm if and only if $\dim N(V_j^*)<\infty$. Furthermore, $\mathrm{ind}(\Fog)=\dim N(V_1^*)-\dim N(V_2^*)$.
\end{proposition}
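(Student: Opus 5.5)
We compute the kernel and cokernel of $\Fog$ directly and use Remark~\ref{closed range and unitary V_1}, which says that $R(\Fog)$ is always closed. Closedness of the range also follows from Proposition~\ref{reduced minmod}, since $\gamma(\Fog)>0$. With closed range in hand, Fredholmness reduces to finite dimensionality of $N(\Fog)$ and $N(\Fog^*)$. The index is then $\dim N(\Fog)-\dim N(\Fog^*)$.

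\emph{Kernel.} As shown earlier, $N(\Fog)=N(V_1^*)\oplus\{0\}$, so $\dim N(\Fog)=\dim N(V_1^*)$.

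\emph{Cokernel.} We have
$$\Fog^*=\begin{bmatrix} V_1 & 0\\ T^* & V_2^*\end{bmatrix}.$$
If $\Fog^*(x,y)=(0,0)$, then $V_1x=0$, so $x=0$ because $V_1$ is injective. It follows that $V_2^*y=0$. Conversely, every $(0,y)$ with $y\in N(V_2^*)$ lies in the kernel. Hence $N(\Fog^*)=\{0\}\oplus N(V_2^*)$ and $\dim N(\Fog^*)=\dim N(V_2^*)$.

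Combining these: $\Fog$ is Fredholm if and only if $\dim N(V_1^*)<\infty$ and $\dim N(V_2^*)<\infty$. In that case
$$\mathrm{ind}(\Fog)=\dim N(V_1^*)-\dim N(V_2^*).$$

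The only point needing care is the closed range claim. If one prefers not to rely on the remark, one can argue directly. Write $\Fog=\begin{bmatrix}V_1^*&0\\0&V_2\end{bmatrix}+\begin{bmatrix}0&T\\0&0\end{bmatrix}$. Alternatively, use the factorization from the proof of Proposition~\ref{reduced minmod}: $\Fog U=(I\oplus V_2)G$, where $G$ is invertible and $I\oplus V_2$ is an isometry. Thus $\Fog$ restricted to $N(\Fog)^\perp$ is bounded below. This gives closed range. The rest is routine.
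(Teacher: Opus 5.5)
Your proposal is correct and follows the paper's proof essentially verbatim. You compute $N(\Fog)=N(V_1^*)\oplus\{0\}$ and $N(\Fog^*)=\{0\}\oplus N(V_2^*)$, then invoke the closedness of $R(\Fog)$, which you justify via $\gamma(\Fog)>0$ or the factorization $\Fog U=(I_{H_1}\oplus V_2)G$; the paper simply cites that closedness. The unfinished sentence splitting $\Fog$ into a diagonal part plus $\begin{bmatrix}0&T\\0&0\end{bmatrix}$ can be deleted, since the factorization argument already gives closed range.
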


\begin{proof}
We already have $N(\Fog)=N(V_1^*)\oplus\{0\}$.
Thus $\dim N(\Fog)=\dim N(V_1^*)$. Let $(x,y)\in N(\Fog^*)$. Since $\Fog^*
=\begin{bmatrix}
V_1 & 0\\
T^* & V_2^*
\end{bmatrix}$, we have
$$
V_1x=0 \,\text{ and }\, T^*x+V_2^*y=0.
$$
Thus $x=0$ and $y\in N(V_2^*)$, and so $N(\Fog^*)=\{0\}\oplus N(V_2^*)$.
Hence $\dim N(\Fog^*)=\dim N(V_2^*)$.

Furthermore, as $\Fog$ always has closed range, it follows that
$$
\Fog\text{ is Fredholm}
\iff
\dim N(V_j^*)<\infty.
$$
Consequently, we obtain
$$
\mathrm{ind}(\Fog)
=\dim N(V_1^*)-\dim N(V_2^*).
$$
\end{proof}

\begin{example}\label{ex fred}
Let $H=\ell^2(\mathbb N)$ with orthonormal basis $\{e_n\}_{n\ge1}$. Let $S$ be the
unilateral shift on $H$. Consider the operator $\Fog=\begin{bmatrix}
V_1^{*} & T\\ 
0 & V_2
\end{bmatrix}$ on $H\oplus H$, where $V_1=S^2$, $V_2=S$ and $T=\mathrm{diag}\left(\frac{1}{n}\right)$ for $n\in\N$. Then $N(V_1^{*})=\mathrm{span}\{e_1,e_2\}$ and $N(V_2^*)=\mathrm{span}\{e_1\}$.
By Proposition \ref{fredholm foguel}, $F$ is Fredholm with index one.

Since $\|T\|=1=\|Te_1\|$, we get $T\in\na(H)$.
Hence by Theorem \ref{NA Foguel} and Theorem \ref{thm:RMA}, $\Fog$ attains its norm as well as its reduced minimum modulus.
But $T$ does not have finite rank, so by Theorem \ref{AN Foguel} and Theorem \ref{AM Foguel},
$\Fog\notin\an(H\oplus H)$ and $\Fog\notin\am(H\oplus H)$, even though $\dim N(V_1^*)<\infty$.
Thus $\Fog$ is Fredholm, norm attaining and reduced minimum attaining, but neither $\mathcal{AN}$ nor $\mathcal{AM}$. Since $T$ is compact, $\Fog\in\overline{\an(H\oplus H)}$, by Theorem \ref{closure-AN-Foguel}.
\end{example}

\begin{example}\label{ex:nonfred}
Let $H_1=\ell^2(\mathbb Z)$ with orthonormal basis $\{e_n\}_{n\in\mathbb Z}$ and let
$W$ be the bilateral shift on $H_1$, defined by $We_n=e_{n+1}$ for $n\in\mathbb Z$. Let $H_2=\ell^2(\N)$ with
orthonormal basis $\{e_n\}_{n\ge1}$. Consider the operators $V_1=W$, $V_2e_n=e_{2n}$ for $n\in\N$ and $Tx=\langle x,e_1\rangle\,e_0$ for $x\in H_2$.
Then $V_2$ is an isometry on $H_2$. Now it is easy to observe that
$$
V_2^*e_m=\begin{cases} e_{m/2}, & m \text{ even},\\ 0, & m \text{ odd},\end{cases}
$$
and $N(V_2^*)=\overline{\mathrm{span}}\{e_{2k-1}:k\in\N\}$,
which is infinite dimensional. The operator $T\in \clb(H_2,H_1)$ has rank one and $\|T\|=1$.
Let $\Fog=\begin{bmatrix}
V_1^* & T\\
0 & V_2
\end{bmatrix}$ on $H_1\oplus H_2$. Since $W$ is unitary, $N(W^*)=\{0\}$, and so
$N(\Fog)=N(W^*)\oplus\{0\}=\{0\}\oplus \{0\}$, whereas
$N(\Fog^*)=\{0\}\oplus N(V_2^*)$ is infinite dimensional. By Proposition \ref{fredholm foguel}, $\Fog$ is not Fredholm.
 
We have $\dim N(V_1^*)=0<\infty$ and $T$ has finite rank. Consequently, by Theorem \ref{AN Foguel} and Theorem \ref{AM Foguel}, it follows that $\Fog$ is $\an$ as well as $\am$.

As $V_1$ is unitary, $\Fog$ is injective. Thus $m(\Fog)=\gamma(\Fog)=\frac{\sqrt5-1}{2}>0$ by Proposition \ref{reduced minmod} and Remark \ref{closed range and unitary V_1}.

This example shows that $\an$ and $\am$ conditions depend only on $\dim N(V_1^*)$ and $T$, while the Fredholm
property also depends on $\dim N(V_2^*)$.
\end{example}

Next, we give the Moore-Penrose inverse for Foguel-type operators.

\begin{theorem}\label{pseudo}
Let $V_j\in \clb(H_j)$ be an isometry for $j=1,2$, and $T\in \clb(H_2,H_1)$. Consider the Foguel-type operator
$\Fog=\begin{bmatrix}
V_1^* & T\\
0 & V_2
\end{bmatrix}$ on $H_1\oplus H_2$. Then the Moore-Penrose inverse of $\Fog$ is given by
\begin{equation}
\Fog^\dagger=\begin{bmatrix}
V_1 & -V_1TV_2^*\\
0 & V_2^*
\end{bmatrix}.
\end{equation}
\end{theorem}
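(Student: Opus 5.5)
We will verify directly that the candidate $G:=\begin{bmatrix} V_1 & -V_1TV_2^*\\ 0 & V_2^*\end{bmatrix}$ satisfies the characterization recalled in the preliminaries. For each $w\in H_1\oplus H_2$, $Gw$ should be the unique vector $z\in N(\Fog)^\perp$ with $\Fog z=P_{R(\Fog)}w$. Equivalently, we check $\Fog G=P_{R(\Fog)}$, $G\Fog=P_{N(\Fog)^\perp}$ and $R(G)\subseteq N(\Fog)^\perp$. All of these are block multiplications using only $V_1^*V_1=I_{H_1}$ and $V_2^*V_2=I_{H_2}$. This determines $\Fog^\dagger$ uniquely, since $\Fog$ has closed range by Remark \ref{closed range and unitary V_1}.

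First, compute
$$
\Fog G=\begin{bmatrix} V_1^*V_1 & -V_1^*V_1TV_2^*+TV_2^*\\ 0 & V_2V_2^*\end{bmatrix}
=\begin{bmatrix} I_{H_1} & 0\\ 0 & V_2V_2^*\end{bmatrix}.
$$
By the proof of Proposition \ref{fredholm foguel}, $N(\Fog^*)=\{0\}\oplus N(V_2^*)$. Hence $R(\Fog)=R(\Fog)^{\perp\perp}=H_1\oplus R(V_2)$, and the block diagonal operator $I_{H_1}\oplus V_2V_2^*$ is exactly the orthogonal projection onto it.

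Second, compute
$$
G\Fog=\begin{bmatrix} V_1V_1^* & V_1T-V_1TV_2^*V_2\\ 0 & V_2^*V_2\end{bmatrix}
=\begin{bmatrix} V_1V_1^* & 0\\ 0 & I_{H_2}\end{bmatrix}.
$$
This is the projection onto $R(V_1)\oplus H_2=N(\Fog)^\perp$, as established in Proposition \ref{reduced minmod}.

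With these two identities, the four Penrose equations follow at once. The products $\Fog G$ and $G\Fog$ are self-adjoint. Also $\Fog G\Fog=P_{R(\Fog)}\Fog=\Fog$. Finally, $G\Fog G=P_{N(\Fog)^\perp}G=G$, because the first component of $Gw$ is $V_1(\cdots)\in R(V_1)$, which gives $R(G)\subseteq R(V_1)\oplus H_2$.

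No step is a real obstacle. The only point requiring care is identifying $R(\Fog)$ correctly, which relies on closedness of the range and the description of $N(\Fog^*)$ already obtained. Alternatively, one can bypass this by checking $(\Fog G)^*=\Fog G$ and $\Fog G\Fog=\Fog$ purely algebraically: compute $\Fog G\Fog=\begin{bmatrix} V_1^* & T\\ 0 & V_2V_2^*V_2\end{bmatrix}=\Fog$, and $G\Fog G$ similarly using $V_2^*V_2V_2^*=V_2^*$ and $V_1V_1^*V_1=V_1$.
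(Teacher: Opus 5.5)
Your proof is correct, but it argues in the opposite direction from the paper.

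\textbf{The paper's route.} The paper \emph{derives} the formula. It first identifies $N(\Fog)^\perp=R(V_1)\oplus H_2$, $R(\Fog)=H_1\oplus R(V_2)$ and $P_{R(\Fog)}(x,y)=(x,V_2V_2^*y)$. It then solves $\Fog(z,w)=P_{R(\Fog)}(x,y)$ with $(z,w)\in N(\Fog)^\perp$. This gives $w=V_2^*y$ and, writing $z=V_1z'$, also $z'=x-TV_2^*y$.

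\textbf{Your route.} You instead \emph{verify} the given candidate $G$ by block multiplication. Your main version uses the same structural inputs as the paper: closed range, $N(\Fog^*)$ from Proposition \ref{fredholm foguel}, and $N(\Fog)^\perp$ from Proposition \ref{reduced minmod}.

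Your closing alternative is more economical. The identities $\Fog G\Fog=\Fog$, $G\Fog G=G$, $\Fog G=I_{H_1}\oplus V_2V_2^*$ and $G\Fog=V_1V_1^*\oplus I_{H_2}$ use only $V_j^*V_j=I_{H_j}$. Combined with the purely algebraic uniqueness of a solution to the Penrose equations, they prove the theorem without any prior description of the kernel or range. They even give these as by-products: $R(\Fog)=R(\Fog G)=H_1\oplus R(V_2)$, so closedness of the range is automatic, and $N(\Fog)=N(G\Fog)=N(V_1^*)\oplus\{0\}$.

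\textbf{Comparison.} The paper's derivation explains where the formula comes from. Your algebraic variant presupposes the candidate, but it is shorter and does not depend on the earlier results of the paper.
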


\begin{proof}
    We have already observed that $N(\Fog)=N(V_1^*)\oplus \{0\}$ and $N(\Fog)^\perp=R(V_1)\oplus H_2$. From Proposition \ref{fredholm foguel}, we have $N(\Fog^*)=\{0\}\oplus N(V_2^*)$. As $R(\Fog)$ is always closed, we get $R(\Fog)=N(\Fog^*)^\perp=H_1\oplus R(V_2)$. For any $(x,y)\in H_1\oplus H_2$, write $$(x,y)=\underbrace{(x,\,V_2V_2^*y)}_{\in\,H_1\oplus R(V_2)=R(F)}+\underbrace{(0,\,(I-V_2V_2^*)y)}_{\in\,\{0\}\oplus N(V_2^*)=R(F)^\perp}.$$
    If $P_{R(\Fog)}$ denotes the orthogonal projection onto $R(\Fog)$, then we get $P_{R(\Fog)}(x,y)=(x,V_2V_2^*y)$. Let $\Fog^\dagger(x,y)=(z,w)$. Then $(z,w)\in N(\Fog)^\perp=R(V_1)\oplus H_2$ and $\Fog(z,w)=P_{R(\Fog)}(x,y)$. Consequently, it follows that $(V_1^*z+Tw, V_2w)=(x,V_2V_2^*y)$. Therefore, $$V_1^*z+Tw=x\, \text{ and } \,V_2w=V_2V_2^*y.$$
    From the second equation we get $w=V_2^*y$. Substituting this in the first equation we get $V_1^*z+TV_2^*y=x$. Since $z\in R(V_1)$, write $z=V_1z^{'}$ for some $z^{'}\in H_1$. Thus $V_1^*V_1z^{'}+TV_2^*y=x$, which further implies $z^{'}=x-TV_2^{*}y$, that is, $z=V_1x-V_1TV_2^*y$. Hence we obtain $\Fog^\dagger(x,y)=(V_1x-V_1TV_2^*y, V_2^*y)$. Writing in the block operator matrix, it follows that $\Fog^\dagger=\begin{bmatrix}
        V_1 & -V_1TV_2^* \\
        0 & V_2^*
    \end{bmatrix}$.
\end{proof}

\begin{proposition}
    Let $V_j\in \clb(H_j)$ be an isometry for $j=1,2$, and $T\in\clb(H_2,H_1)$. Consider the Foguel-type operator $\Fog=\begin{bmatrix}
        V_1^{*} & T \\
        0 & V_2
    \end{bmatrix}$. Then the following are equivalent:
    \begin{enumerate}
        \item $\Fog^\dagger$ is norm attaining;

        \item $\Fog$ is reduced minimum attaining;

        \item $T$ is norm attaining.
    \end{enumerate} 
\end{proposition}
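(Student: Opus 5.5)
The plan is to chain two equivalences that are already available: (1)$\iff$(2) from the general Moore--Penrose machinery recalled in the Preliminaries, and (2)$\iff$(3) from Theorem \ref{thm:RMA}. No new computation should be needed beyond checking the hypotheses of the cited results.

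For (1)$\iff$(2), I will apply \cite[Theorem 3.2]{SHK-GR2020}, as recalled in Section 2. For a nonzero closed range operator $A$, it gives $\gamma(A)=1/\|A^\dagger\|$ and states that $A^\dagger$ is norm attaining if and only if $A$ is reduced minimum attaining. The hypotheses hold for $\Fog$.
\begin{itemize}
\item[$\bullet$] Remark \ref{closed range and unitary V_1} shows that $\Fog$ has closed range, and Proposition \ref{reduced minmod} shows $\gamma(\Fog)>0$.
\item[$\bullet$] $\Fog\neq 0$, since its $(2,2)$ entry $V_2$ is an isometry on a nonzero space.
\end{itemize}
Hence $\Fog^\dagger$, given explicitly in Theorem \ref{pseudo}, is norm attaining exactly when $\Fog$ is reduced minimum attaining.

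For (2)$\iff$(3), Theorem \ref{thm:RMA} gives the equivalence directly.

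As an optional consistency check, one could argue (1)$\iff$(3) directly from the formula in Theorem \ref{pseudo}. Write
$$\Fog^\dagger=\begin{bmatrix} V_1&0\\0&I\end{bmatrix}\begin{bmatrix} I&-T\\0&I\end{bmatrix}\begin{bmatrix} I&0\\0&V_2^*\end{bmatrix}.$$
Here the left factor is an isometry and the right factor is a coisometry. One then needs that norm attainment is preserved under these multiplications, using that $G^{-1}$ and $G$ are unitarily equivalent via $D$, as in the proof of Theorem \ref{thm:RMA}. The left isometric factor is harmless. The only mildly delicate point is the right factor $I\oplus V_2^*$, a coisometry. Its restriction to $H_1\oplus R(V_2)$ is unitary onto $H_1\oplus H_2$, and it kills $\{0\}\oplus N(V_2^*)$, so a maximizing unit vector can be projected into $H_1\oplus R(V_2)$ without decreasing the ratio.

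This direct route is not needed, however. The main "obstacle" is simply verifying the hypotheses of \cite[Theorem 3.2]{SHK-GR2020}, namely nonzero and closed range, which is routine.
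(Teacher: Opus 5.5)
Your proposal is correct and follows the paper's proof: (i)$\iff$(ii) comes from \cite[Theorem 3.2]{SHK-GR2020}, whose hypotheses (closed range and $\Fog\neq 0$) you rightly check, and (ii)$\iff$(iii) is Theorem \ref{thm:RMA}. Your optional direct route through the factorization of $\Fog^\dagger$ is also sound but not needed.
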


\begin{proof}
    (i)$\iff$(ii) follows from \cite[Theorem 3.2]{SHK-GR2020}. On the other hand, by Theorem \ref{thm:RMA}, we obtain (ii)$\iff$(iii).
\end{proof}

\begin{corollary}\label{prop:unitaryinvertible}
Let $V_j\in \clb(H_j)$ be unitary for $j=1,2$, and $T\in \clb(H_2,H_1)$. Then $\Fog=\begin{bmatrix}
    V_1^{*} & T \\
    0 & V_2
\end{bmatrix}$ is invertible with
$$
\Fog^{-1} = \begin{bmatrix} V_1 & -V_1TV_2^*\\
0 & V_2^*
\end{bmatrix}.
$$
\end{corollary}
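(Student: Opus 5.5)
The plan is to read the corollary off Theorem \ref{pseudo}, using the fact that the Moore--Penrose inverse of an invertible operator is its inverse. The first step is to show that $\Fog$ is invertible. Since $V_1$ is unitary, $N(V_1^*)=\{0\}$, so $N(\Fog)=N(V_1^*)\oplus\{0\}=\{0\}$ and $\Fog$ is injective. Since $V_2$ is unitary, $N(V_2^*)=\{0\}$, so the computation in the proof of Proposition \ref{fredholm foguel} gives $N(\Fog^*)=\{0\}\oplus N(V_2^*)=\{0\}$.

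By Remark \ref{closed range and unitary V_1}, $\Fog$ has closed range. Hence $R(\Fog)=N(\Fog^*)^\perp=H_1\oplus H_2$. Thus $\Fog$ is a bijective bounded operator, and it is invertible by the bounded inverse theorem. Equivalently, Proposition \ref{fredholm foguel} shows that $\Fog$ is Fredholm of index $0$ with trivial kernel.

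For an invertible operator we have $\Fog^\dagger=\Fog^{-1}$, as recalled in the preliminaries. Theorem \ref{pseudo} then gives the stated formula directly.

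As a cross-check, I would multiply the two block matrices. In $\Fog\,\Fog^{-1}$ the diagonal entries are $V_1^*V_1=I$ and $V_2V_2^*=I$, and the off-diagonal entry is $-V_1^*V_1TV_2^*+TV_2^*=0$. In the reverse product $\Fog^{-1}\Fog$, the diagonal entries are $V_1V_1^*=I$ and $V_2^*V_2=I$; these use unitarity rather than mere isometry. The off-diagonal entry is $V_1T-V_1TV_2^*V_2=0$.

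There is no real obstacle here. The only point to watch is that the reverse product needs $V_1V_1^*=I$, which is exactly where unitarity of $V_1$ is used.
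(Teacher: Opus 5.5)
Your proof is correct and follows the paper's intended route: the corollary is read off from Theorem \ref{pseudo} once $\Fog$ is shown to be invertible (trivial kernel and cokernel from Proposition \ref{fredholm foguel}, together with closed range), since then $\Fog^\dagger=\Fog^{-1}$. One small correction to your cross-check: the forward product $\Fog\Fog^{-1}$ already needs $V_2V_2^*=I$, i.e.\ unitarity of $V_2$. In the reverse product only $V_1V_1^*=I$ uses unitarity, because $V_2^*V_2=I$ holds for any isometry. So each unitarity hypothesis is used in exactly one of the two products, not just that of $V_1$.
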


Since $\dim N(V_j^*)=0<\infty$ automatically when $V_j$ is unitary, Theorem \ref{AN Foguel} and Theorem \ref{AM Foguel} specialize as follows.

\begin{corollary}\label{cor:unitaryANAM}
Let $V_j\in \clb(H_j)$ be unitary for $j=1,2$, and $T\in \clb(H_2,H_1)$. Then $\Fog\in
\an(H_1\oplus H_2)$ if and only if $\Fog\in\am(H_1\oplus H_2)$ if and only
if $T$ is of finite rank.
\end{corollary}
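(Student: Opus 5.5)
This corollary is the specialization of Theorem \ref{AN Foguel} and Theorem \ref{AM Foguel} to the case where both $V_1$ and $V_2$ are unitary. A unitary operator is in particular an isometry. Hence both theorems apply verbatim to $\Fog=\begin{bmatrix} V_1^* & T\\ 0 & V_2\end{bmatrix}$ with $T\in\clb(H_2,H_1)$ arbitrary.

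First, since $V_1$ is unitary, $V_1V_1^*=I_{H_1}$, so $N(V_1^*)=\{0\}$ and $\dim N(V_1^*)=0<\infty$. This makes the defect-space half of condition (ii) in both theorems automatically true. Thus condition (ii) in each theorem reduces to the single requirement that $T$ has finite rank.

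Then Theorem \ref{AN Foguel} gives $\Fog\in\an(H_1\oplus H_2)$ if and only if $T\in\clf(H_2,H_1)$. Likewise, Theorem \ref{AM Foguel} gives $\Fog\in\am(H_1\oplus H_2)$ if and only if $T\in\clf(H_2,H_1)$. Chaining these two equivalences through the common condition yields the three-way equivalence claimed.

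There is no genuine obstacle here; the only point to check is that unitarity of $V_2$ is not needed beyond its being an isometry. Unitarity of $V_2$ is irrelevant to the $\an$/$\am$ conditions, consistent with the remark after Example \ref{ex:nonfred}. It matters only for invertibility, as in Corollary \ref{prop:unitaryinvertible}.
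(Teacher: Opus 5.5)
Your proposal is correct and matches the paper's argument: the paper obtains this corollary by specializing Theorem \ref{AN Foguel} and Theorem \ref{AM Foguel}, noting that $\dim N(V_1^*)=0$ for unitary $V_1$, so both conditions reduce to $T$ having finite rank. Your side remark is also accurate: only the unitarity of $V_1$ is used, and $V_2$ merely needs to be an isometry.
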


\subsection{Applications to Hardy space operators}
Let $\mathbb{D}=\{z\in\mathbb{C}:|z|<1\}$ be the open unit disc and
$\mathbb{T}=\{z\in\mathbb{C}:|z|=1\}$ the unit circle. Let $L^{2}(\mathbb{T})$ denote the space of
square integrable functions on $\mathbb{T}$ with respect to the normalized Lebesgue measure $\mu$, and
$L^{\infty}(\mathbb{T})$ the Banach space of essentially bounded measurable functions on $\mathbb{T}$.
For $f\in L^{2}(\mathbb{T})$, let $\widehat f(n):=\langle f,\chi_n\rangle$ be its $n$-th Fourier coefficient,
where $\chi_n(z)=z^{n}$ for $z\in\mathbb{T}$ and $n\in\mathbb{Z}$.

The \emph{Hardy space} $H^{2}$ is the space of analytic functions
$f(z)=\sum_{n\geq 0}a_nz^{n}$ on $\mathbb{D}$ such that $\sum_{n\geq 0}|a_n|^{2}<\infty$, with the norm
$\|f\|_{2}=(\sum_{n\geq 0}|a_n|^{2})^{1/2}$. Every $f\in H^{2}$ has a radial limit
$\widetilde{f}(\zeta)=\lim_{r\to1^{-}}f(r\zeta)$ for almost every $\zeta\in\mathbb{T}$, and $f\mapsto \widetilde{f}$
identifies $H^{2}$ isometrically with the closed subspace
$$
\{g\in L^{2}(\mathbb{T}):\ \widehat g(n)=0\ \text{ for all } n<0\}
$$
of $L^{2}(\mathbb{T})$, the coefficients $a_n$ corresponding to $\widehat{\widetilde{f}}(n)$. We use this identification
throughout and write $f$ for $\widetilde{f}$ on $\T$. The functions $\{z^{n}\}_{n\ge0}$ form an orthonormal basis
of $H^{2}$, and for $w\in\mathbb{D}$, the point evaluation is given by $f(w)=\langle f,k_w\rangle$ with
$k_w(z)=(1-\overline{w}z)^{-1}$.

The Hardy space $H^{\infty}$ consists of the bounded analytic functions on $\mathbb{D}$. In terms of boundary functions, we write
$H^{\infty}=H^{2}\cap L^{\infty}(\mathbb{T})$.

A function $\theta\in H^{\infty}$ is called \emph{inner} if $|\theta(\zeta)|=1$ for almost every
$\zeta\in\mathbb{T}$. For $a\in\mathbb{D}\setminus\{0\}$
the function $b_a(z)=\frac{|a|}{a}\,\frac{a-z}{1-\overline{a}z}$ is analytic on a neighbourhood of
$\overline{\mathbb{D}}$ with $|b_a|=1$ on $\mathbb{T}$, and we write $b_0(z)=z$. A \emph{finite Blaschke product}
is a function $B=c\,b_{a_1}\cdots b_{a_m}$ with $|c|=1$ and $a_1,\dots,a_m\in\mathbb{D}$ not necessarily
distinct. It is an inner function, and its zeros in $\mathbb{D}$ are $a_1,\dots,a_m$. Also $\deg B =m$ is the number of its zeros in $\mathbb{D}$ counted with multiplicity. Every inner function that is not a finite Blaschke
product either has infinitely many zeros in $\mathbb{D}$ or has a nontrivial singular inner factor; see
\cite{Modelspace2016} for further details.

For $\vp\in L^\infty(\T)$, the Toeplitz operator $T_\vp$ and the Hankel operator $H_\vp$ on $H^2$ are respectively defined by
$$
T_\vp f=P(\vp f), \qquad f\in H^2,
$$
and
$$
H_\vp f=J(I-P)(\vp f), \qquad f\in H^2,
$$
where $P:L^2(\T)\to L^2(\T)$ denotes the orthogonal projection onto $H^2$,
and $J:L^2(\T)\to L^2(\T)$ is the unitary operator defined by
$$
J(z^n)=z^{-n-1},\qquad n\in\mathbb{Z}.
$$
We refer to \cite{Douglas,Rosenthal} for further details.
\begin{proposition}\label{Toeplitz case}
    Let $\vp\in L^\infty(\T)$ and $\,\Fog_\vp:=\begin{bmatrix}
        T_z^* & T_\vp \\
        0 & T_z
    \end{bmatrix}$ be a Foguel-type operator on $H^2\oplus H^2$. Then 
    \begin{enumerate}
        \item $\Fog_\vp$ is norm attaining if and only if $\vp=\|\vp\|_{\infty}\overline{\Theta}_1\Theta_2$, where $\Theta_1$ and $\Theta_2$ are inner functions with no common inner factor.
        \item $\Fog_\vp$ is $\an$ if and only if $\vp=0$ a.e. on $\T$.
     \end{enumerate}
\end{proposition}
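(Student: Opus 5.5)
Both parts reduce the question about $\Fog_\vp$ to a question about the Toeplitz corner entry $T_\vp$, via the results of Section 3 with $V_1=V_2=T_z$ on $H^2$.

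\emph{Part (1).} Since $T_z$ is an isometry, Theorem \ref{NA Foguel} gives $\Fog_\vp\in\na(H^2\oplus H^2)$ if and only if $T_\vp\in\na(H^2)$. It remains to characterize norm attaining Toeplitz operators. If $\vp=0$ the statement is trivial (with the convention that $\vp=0$ is allowed), so assume $\vp\neq0$ and recall $\|T_\vp\|=\|\vp\|_\infty$.

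\emph{Sufficiency.} Suppose $\vp=\|\vp\|_\infty\overline{\Theta}_1\Theta_2$. Take $f=\Theta_1\in H^2$. Then $\vp f=\|\vp\|_\infty\Theta_2\in H^2$, so
\[
\|T_\vp f\|_2=\|\vp\|_\infty\|\Theta_2\|_2=\|\vp\|_\infty\|f\|_2 .
\]

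\emph{Necessity.} Suppose $\|P(\vp f)\|_2=\|\vp\|_\infty\|f\|_2$ for some unit vector $f$. The chain
\[
\|P(\vp f)\|_2\le\|\vp f\|_2\le\|\vp\|_\infty\|f\|_2
\]
forces equality at both steps, so $\vp f\in H^2$ and $|\vp|=\|\vp\|_\infty$ a.e. on $\{f\neq0\}$. Since $f\in H^2\setminus\{0\}$ is nonzero a.e. on $\T$, $|\vp|=\|\vp\|_\infty$ a.e. Put $u=\vp/\|\vp\|_\infty$, a unimodular function with $uf=g\in H^2$ and $|g|=|f|$. Write $f=\Theta_f h$ and $g=\Theta_g h'$ as inner–outer factorizations. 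The outer parts have the same modulus, so $h=h'$ up to a unimodular constant, which we absorb. Hence $u=\Theta_g\overline{\Theta}_f$, and cancelling the greatest common inner factor yields coprime $\Theta_1,\Theta_2$ with $u=\overline{\Theta}_1\Theta_2$.

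I expect this necessity direction, specifically the inner–outer argument showing that $u$ is a quotient of inner functions, to be the main step.

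\emph{Part (2).} By Theorem \ref{AN Foguel}, with $\dim N(T_z^*)=1<\infty$, $\Fog_\vp\in\an$ if and only if $T_\vp$ has finite rank. A nonzero Toeplitz operator has no nonzero finite rank, since it is not even compact: $\|T_\vp\|_e=\|\vp\|_\infty$ (Hartman–Wintner/Brown–Halmos). Therefore $T_\vp$ has finite rank if and only if $\vp=0$ a.e.
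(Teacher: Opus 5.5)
Your proof is correct. It uses the same reduction as the paper: Theorem~\ref{NA Foguel} turns (1) into norm attainment of $T_\vp$, and Theorem~\ref{AN Foguel}, with $\dim N(T_z^*)=1$, turns (2) into finite rank of $T_\vp$.

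The difference lies in how the resulting Toeplitz facts are handled.

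\begin{itemize}
\item For (1), the paper cites \cite[Theorem 4]{Yoshino2002} for the characterization of norm attaining Toeplitz operators. You prove that characterization directly:
\begin{itemize}
\item equality in $\|P(\vp f)\|_2\le\|\vp f\|_2\le\|\vp\|_\infty\|f\|_2$ forces $\vp f\in H^2$ and $|\vp|=\|\vp\|_\infty$ a.e., using that nonzero $H^2$ functions vanish only on a null set;
\item the outer parts of $f$ and $\vp f$ have the same boundary modulus, so they agree up to a unimodular constant, which gives $\vp/\|\vp\|_\infty=\Theta_g\overline{\Theta}_f$;
\item dividing out the greatest common inner divisor then yields coprime $\Theta_1,\Theta_2$.
\end{itemize}
\item For (2), the paper simply asserts that $T_\vp$ has finite rank only when $\vp=0$. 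You justify this through noncompactness of nonzero Toeplitz operators.
\end{itemize}

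Your route makes the proposition self-contained. It shows exactly where $\|T_\vp\|=\|\vp\|_\infty$, the a.e.\ nonvanishing of $H^2$ functions and the uniqueness of outer functions enter. The paper's route is shorter but outsources all of the function theory to the literature.

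For the last step you do not need the full essential norm formula. The elementary Brown--Halmos identity $T_z^{*n}T_\vp T_z^{n}=T_\vp$, combined with $T_z^n\to0$ weakly, already shows that a compact Toeplitz operator is zero.
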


\begin{proof}
\textbf{Proof of (i):} Since the Toeplitz operator $T_z$ is an isometry, by Theorem \ref{NA Foguel}, we have
$$
\Fog_\vp \in \na(H^2\oplus H^2)
\iff T_\vp\in\na(H^2).
$$
Equivalently, by
\cite[Theorem 4]{Yoshino2002}, we can say $\varphi=\|\varphi\|_{\infty}\,\overline{\Theta}_1\Theta_2$,
where $\Theta_1$ and $\Theta_2$ are inner functions having no common inner factor.

\smallskip
\textbf{Proof of (ii):} Note that $T_z$ is an isometry with $N(T^*_z)=\C1$. Thus by Theorem \ref{AN Foguel}, we get $\Fog_\vp$ is $\an$ if and only if $T_\vp$ has finite rank, that is, $\vp=0$ a.e. on $\T$.
\end{proof}

\begin{proposition}\label{hankel}
    Let $\vp\in L^\infty(\T)$ and $\,\Fog_\vp=\begin{bmatrix}
        T_z^* & H_\vp \\
        0 & T_z
    \end{bmatrix}$ be a Foguel-type operator on $H^2\oplus H^2$. Then 
    \begin{enumerate}
        \item $\Fog_\vp$ is norm attaining if and only if $\vp=dist(\vp,H^\infty)(\Theta + \eta)$, where $\Theta\in H^\infty$ and $\eta\in L^\infty(\T)$ with $|\eta|=1$ a.e. on $\T$ and $0\in \sigma_p(T_\eta)$.
        
        \item $\Fog_\vp$ is $\an$ if and only if $(I-P)\vp$ is a rational function.

        \item $\Fog_\vp\in
        \overline{\an(H^2\oplus H^2)}$ if and only if $\vp\in H^\infty+C(\T)$.
     \end{enumerate}
\end{proposition}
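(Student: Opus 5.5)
The plan is to follow the template of Proposition \ref{Toeplitz case}. Since $T_z$ is an isometry with $N(T_z^*)=\C 1$, which is one dimensional, the paper's general results reduce each item to a known fact about the single Hankel operator $H_\vp$.

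\textbf{Item (i).} By Theorem \ref{NA Foguel}, $\Fog_\vp\in\na(H^2\oplus H^2)$ if and only if $H_\vp\in\na(H^2)$. By Nehari's theorem, $\|H_\vp\|=\operatorname{dist}(\vp,H^\infty)$. So it suffices to invoke (or reprove) the known characterization of norm attaining Hankel operators, for instance the one in \cite{Yoshino2002} or in \cite{AN Toeplitz}.

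If a self-contained proof is wanted, I would argue as follows.
\begin{itemize}
\item[] Suppose $H_\vp$ attains its norm at $f\neq 0$. Choose a best approximant $\Theta\in H^\infty$ with $\|\vp-\Theta\|_\infty=d:=\operatorname{dist}(\vp,H^\infty)$.
\item[] The extremal equality $\|(I-P)((\vp-\Theta)f)\|_2=d\|f\|_2$ forces two things: $|\vp-\Theta|=d$ a.e. and $P((\vp-\Theta)f)=0$.
\item[] Set $\eta=(\vp-\Theta)/d$. Then $|\eta|=1$ a.e. and $T_\eta f=0$, so $0\in\sigma_p(T_\eta)$.
\item[] After rescaling $\Theta$ by $1/d$, this gives $\vp=d(\Theta+\eta)$.
\item[] Conversely, if $T_\eta f=0$ with $f\neq0$, then $H_\vp f=dJ(\eta f)$, which has norm $d\|f\|$, so $H_\vp$ attains its norm.
\end{itemize}
If $d=0$ the statement degenerates (then $H_\vp=0$, which is trivially norm attaining). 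I would note this case separately.

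\textbf{Item (ii).} By Theorem \ref{AN Foguel}, $\Fog_\vp\in\an$ if and only if $H_\vp$ has finite rank, since $\dim N(T_z^*)=1$. By Kronecker's theorem, $H_\vp$ has finite rank if and only if the antianalytic part $(I-P)\vp$ is a rational function.

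\textbf{Item (iii).} By Theorem \ref{closure-AN-Foguel}, $\Fog_\vp\in\overline{\an}$ if and only if $H_\vp$ is compact. By Hartman's theorem, this holds if and only if $\vp\in H^\infty+C(\T)$.

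The only potentially delicate step is item (i). One must match the paper's exact formulation: the convention for $J$, the decomposition $\vp=d(\Theta+\eta)$, and the degenerate case $d=0$. Items (ii) and (iii) are immediate from the classical theorems.
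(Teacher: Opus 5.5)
Your proposal is correct and is essentially the paper's proof: each item is reduced to a property of $H_\vp$ via Theorems \ref{NA Foguel}, \ref{AN Foguel} and \ref{closure-AN-Foguel} (using $\dim N(T_z^*)=1$), and then Yoshino's characterization \cite{Yoshino2002}, Kronecker's theorem and Hartman's theorem (the latter two cited from \cite{Peller}) finish each item. Your separate treatment of $d=0$ is well taken: for $0\neq\vp\in H^\infty$ the operator $\Fog_\vp=T_z^*\oplus T_z$ attains its norm while $\vp=0\cdot(\Theta+\eta)$ fails, so the ``only if'' direction of (i) as literally stated must exclude that case. Also, in your optional self-contained sketch, deducing $|\vp-\Theta|=d$ a.e.\ from the equality case uses the fact that a nonzero $f\in H^2$ vanishes only on a null subset of $\T$.
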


\begin{proof}
\textbf{Proof of (i):} Using \cite[Theorem 5]{Yoshino2002} the proof of (i) is similar to Proposition \ref{Toeplitz case}.

\smallskip
\textbf{Proof of (ii):} Since $T_z$ is isometry and $\dim N(T^*_z)=1$, by Theorem \ref{AN Foguel}, $\Fog_\vp$ is $\an$ if and only if $H_\vp$ has finite rank. By \cite[Corollary 3.2, Page 21]{Peller}, it is equivalent that $(I-P)\vp$ is a rational function.  

\smallskip
\textbf{Proof of (iii):} By Theorem \ref{closure-AN-Foguel}, $\Fog_\vp\in \overline{\an(H^2\oplus H^2)}$ if and only if $H_\vp$ is compact. Equivalently, by \cite[Theorem 5.5, Page 27]{Peller}, we obtain $\vp\in H^\infty + C(\T)$.
\end{proof}

\begin{example}
Consider $\vp(z)=\sum_{n\geq 1}2^{-n}\,\bar z^{\,2^n}$ for $z\in\T$.
The series converges absolutely and uniformly on $\T$, so $\vp$
is continuous. In particular $\vp\in H^\infty+C(\T)$, and by
Proposition \ref{hankel}(iii), we get $\Fog_\vp\in\overline{\an(H^2\oplus H^2)}$.

Since $\vp$ has only negative Fourier coefficients, $(I-P)\vp=\vp$. As $\vp$ is not a rational function, by
Proposition \ref{hankel}(ii), it follows that $\Fog_\vp\notin\an(H^2\oplus H^2)$.

On the other hand, for $\vp(z)=\bar{z}^2$ on $\T$, we have $H_\vp=z\otimes 1 + 1\otimes z$, which has rank two. Consequently, $\Fog_\vp$ is $\an$.
\end{example}

\begin{theorem}\label{thm:inner-foguel}
Let $\theta_1,\theta_2$ be nonconstant inner functions, and $T\in\mathcal{B}(H^{2})$. Consider the Foguel-type operator $\Fog_{\theta_1,\theta_2;T}=\begin{bmatrix} T_{\theta_1}^{*} & T\\ 0 & T_{\theta_2}\end{bmatrix}$ on $H^2\oplus H^2$.
Write $\clk_{\theta_j}:=H^{2}\ominus\theta_j H^{2}$ for the model space associated with $\theta_j$ for $j=1,2$. Then the following hold:
\begin{enumerate}
\item $N(\Fog_{\theta_1,\theta_2;T})=\clk_{\theta_1}\oplus\{0\}$, $\;N(\Fog_{\theta_1,\theta_2;T}^{*})=\{0\}\oplus\, \clk_{\theta_2}$, and
$R(\Fog_{\theta_1,\theta_2;T})=H^{2}\oplus\theta_2H^{2}$ is closed.
\item $\Fog_{\theta_1,\theta_2;T}$ is Fredholm if and only if $\theta_1$ and $\theta_2$ are finite Blaschke products, and then
$$
\mathrm{ind}\,(\Fog_{\theta_1,\theta_2;T})=\deg\theta_1-\deg\theta_2 .
$$
\item $\Fog_{\theta_1,\theta_2;T}\in\mathcal{AN}(H^{2}\oplus H^{2})$ if and only if
$\Fog_{\theta_1,\theta_2;T}\in\mathcal{AM}(H^{2}\oplus H^{2})$ if and only if $\theta_1$ is a finite Blaschke product and $T$ has finite rank.
In particular, these properties do not depend on $\theta_2$.
\end{enumerate}
\end{theorem}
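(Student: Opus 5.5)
The plan is to reduce everything to the general results already proved for arbitrary isometries, using the standard facts about Toeplitz operators with inner symbols. First I would check that $T_{\theta_j}$ is an isometry on $H^2$: for $f\in H^2$ we have $T_{\theta_j}f=\theta_j f$, and $|\theta_j|=1$ a.e. on $\T$ gives $\|\theta_j f\|_2=\|f\|_2$. Its range is $\theta_jH^2$, which is therefore closed. Hence $N(T_{\theta_j}^*)=R(T_{\theta_j})^{\perp}=H^2\ominus\theta_jH^2=\clk_{\theta_j}$.

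For (i), I would invoke the kernel computation from the proof of Proposition \ref{fredholm foguel} together with the range identification in the proof of Theorem \ref{pseudo}. These give $N(\Fog)=N(V_1^*)\oplus\{0\}$, $N(\Fog^*)=\{0\}\oplus N(V_2^*)$ and $R(\Fog)=H_1\oplus R(V_2)$, closed by Remark \ref{closed range and unitary V_1}. Substituting $V_j=T_{\theta_j}$ then yields the three formulas, with $R(V_2)=\theta_2H^2$.

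For (ii), Proposition \ref{fredholm foguel} says $\Fog_{\theta_1,\theta_2;T}$ is Fredholm if and only if $\dim\clk_{\theta_j}<\infty$ for $j=1,2$, with index $\dim\clk_{\theta_1}-\dim\clk_{\theta_2}$. So the real content is the classical model space fact: $\dim\clk_\theta<\infty$ if and only if $\theta$ is a finite Blaschke product, and then $\dim\clk_\theta=\deg\theta$. This is the main step, and I would cite \cite{Modelspace2016} for it, but a sketch would go as follows.

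If $\theta=c\,b_{a_1}\cdots b_{a_m}$ with distinct zeros, then $f\in\theta H^2$ exactly when $f$ vanishes at each $a_i$ (Blaschke/inner–outer factorization). So $\clk_\theta$ is spanned by the kernels $k_{a_i}$, which are linearly independent, giving dimension $m$. Repeated zeros are handled with derivatives of kernels.

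Conversely, if $\theta$ has infinitely many zeros $a_1,a_2,\dots$, the kernels $k_{a_i}$ lie in $\clk_\theta$ and are linearly independent, so $\dim\clk_\theta=\infty$. If $\theta$ has a nontrivial singular inner factor $s$, then $\clk_s\subseteq\clk_\theta$. Moreover $\clk_s$ is infinite dimensional: otherwise $T_z^*|_{\clk_s}$ would be an operator on a finite-dimensional invariant subspace and hence have an eigenvector. Eigenvectors of $T_z^*$ are $k_w$ with $w\in\D$, and $k_w\in\clk_s$ forces $s(w)=0$, which is impossible for a singular inner function.

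For (iii), Theorems \ref{AN Foguel} and \ref{AM Foguel} give that both properties are equivalent to $\dim N(T_{\theta_1}^*)<\infty$ together with $T$ having finite rank. By the model space fact from (ii), $\dim\clk_{\theta_1}<\infty$ is equivalent to $\theta_1$ being a finite Blaschke product. Neither condition involves $\theta_2$, which gives the final sentence of the statement.
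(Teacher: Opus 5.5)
Your proposal is correct and follows the paper's proof essentially step for step. Parts (i) and (ii) come from Proposition~\ref{fredholm foguel} and Theorem~\ref{pseudo} with $V_j=T_{\theta_j}$, and part (iii) from Theorems~\ref{AN Foguel} and~\ref{AM Foguel}, with the model space fact ($\dim\clk_\theta<\infty$ iff $\theta$ is a finite Blaschke product, in which case $\dim\clk_\theta=\deg\theta$) doing the translation. The only difference is that you sketch this model space fact yourself, soundly, via reproducing kernels and the absence of eigenvectors of $T_z^*$ in $\clk_s$ for singular inner $s$, whereas the paper simply cites \cite[Propositions~5.16 and~5.19]{Modelspace2016}.
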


\begin{proof}
Since $\theta_j$ is inner, $T_{\theta_j}$ is the isometry with $R(T_{\theta_j})=\theta_jH^{2}$ and
$N(T_{\theta_j}^{*})=H^{2}\ominus\theta_jH^{2}=\clk_{\theta_j}$ for $j=1,2$.

\textbf{Proof of (i):} This is the computation of $N(\Fog)$, $N(\Fog^{*})$ and $R(\Fog)$ in Proposition \ref{fredholm foguel} and Theorem \ref{pseudo}
with $V_1=T_{\theta_1}$ and $V_2=T_{\theta_2}$.

\textbf{Proof of (ii):} By Proposition~\ref{fredholm foguel}, $\Fog_{\theta_1,\theta_2;T}$ is Fredholm if and only if
$\dim \clk_{\theta_j}<\infty$ for $j=1,2$. Now from \cite[Proposition~5.19]{Modelspace2016}, we have $\dim \clk_{\theta_j}<\infty$ if and only if $\theta_j$ is a finite Blaschke product for $j=1,2$. In that case, by
\cite[Proposition~5.16]{Modelspace2016}, $\dim \clk_{\theta_j}$ equals the number of zeros of $\theta_j$ in $\D$,
counted with multiplicity, that is, $\dim \clk_{\theta_j}=\deg\theta_j$. Hence $\Fog_{\theta_1,\theta_2;T}$ is
Fredholm if and only if $\theta_1$ and $\theta_2$ are finite Blaschke products.

By part (i), it follows that
$\dim N(\Fog_{\theta_1,\theta_2;T})=\dim \clk_{\theta_1}=\deg\theta_1$ and
$\dim N(\Fog_{\theta_1,\theta_2;T}^{*})=\dim \clk_{\theta_2}=\deg\theta_2$. Consequently, we obtain
$$
\mathrm{ind}\,(\Fog_{\theta_1,\theta_2;T})=\deg\theta_1-\deg\theta_2 .
$$

\textbf{Proof of (iii):} By Theorem \ref{AN Foguel} and Theorem \ref{AM Foguel}, each property is equivalent to
$\dim N(T_{\theta_1}^{*})<\infty$ and $T$ of finite rank. Since $N(T_{\theta_1}^{*})=\clk_{\theta_1}$, the first condition says $\theta_1$ is a finite Blaschke product by the argument in (ii).
\end{proof}

\begin{example}
Consider the function $u(z)=\exp\,\bigl(\tfrac{z+1}{z-1}\bigr)$. Then $u$ is a singular inner function. Since $u$ has no zeros in
$\mathbb{D}$ and is nonconstant, it is not a finite Blaschke product. Hence  by
\cite[Proposition 5.19]{Modelspace2016}, $\dim \clk_u=\infty$.
Let $\theta_1=z$, $\theta_2=u$ and $T=1\otimes1$. By Theorem \ref{thm:inner-foguel}(iii),
$\Fog_{z,u;T}$ is $\an$ as well as $\am$. But $N(\Fog_{z,u;T}^{*})=\{0\}\oplus \clk_u$ is infinite
dimensional. Consequently, $\Fog_{z,u;T}$ is not Fredholm.

If we choose $\theta_1=u$, $\theta_2=z$ and the same $T$ above. Then $\Fog_{u,z;T}$ is neither $\an$ nor $\am$. 
\end{example}
\section*{Declarations}

\subsection*{Conflict of Interest}
The authors declare that there is no conflict of interest relevant to this article.

\subsection*{Funding}
Not Applicable.

\subsection*{Author Contributions} 
Both authors contributed equally to the conceptualization and preparation of the manuscript.

\subsection*{Data Availability}
Not Applicable.

\section*{Acknowledgements}
The authors used ChatGPT (OpenAI) to assist with improving the language and polishing the grammar. All mathematical ideas, results, proofs, and conclusions are the authors' own, and the authors take full responsibility for the contents of the manuscript.


\begin{thebibliography}{99}

\bibitem{Apostol} C. Apostol, The reduced minimum modulus, Michigan Math. J. {\bf 32} (1985), no.~3, 279--294.

\bibitem{NR2021}
N. Bala and G. Ramesh, A representation of hyponormal absolutely norm attaining operators, Bull. Sci. Math. {\bf 171} (2021), Paper No. 103020, 15 pp.

\bibitem{NR} N. Bala and G. Ramesh, Hyperinvariant subspaces for normaloid essential isometric operators, J. Math. Anal. Appl. {\bf 543} (2025), no.~2, Paper No. 128998, 10 pp.

\bibitem{Bhuia} S.~R. Bhuia, Minimum and Essential Minimum Moduli of Generalized Quadratic Operators, Results Math. {\bf 81} (2026), no.~8, Paper No. 202.

\bibitem{Bouldin} R. Bouldin, The essential minimum modulus, Indiana Univ. Math. J. {\bf 30} (1981), no.~4, 513--517.

\bibitem{Carvajal1} X. Carvajal and W. Neves, Operators that achieve the norm, Integral Equations Operator Theory {\bf 72} (2012), no.~2, 179--195.

\bibitem{Carvajal2} X. Carvajal and W. Neves, Operators that attain their minima, Bull. Braz. Math. Soc. (N.S.) {\bf 45} (2014), no.~2, 293--312.

\bibitem{Douglas} R.~G. Douglas, {\it Banach algebra techniques in operator theory}, second edition, Graduate Texts in Mathematics, 179, Springer, New York, 1998.

\bibitem{Feldman} I.~A. Feldman, N.~Y. Krupnik and A.~S. Markus, On the norm of polynomials of two adjoint projections, Integral Equations Operator Theory {\bf 14} (1991), no.~1, 69--91.

\bibitem{Foguel} S.~R. Foguel, A counterexample to a problem of Sz.-Nagy, Proc. Amer. Math. Soc. {\bf 15} (1964), 788--790.

\bibitem{AM Cha} J. Ganesh, G. Ramesh and D. Sukumar, A characterization of absolutely minimum attaining operators, J. Math. Anal. Appl. {\bf 468} (2018), no.~1, 567--583.

\bibitem{Garcia2009} S.~R. Garcia, The norm and modulus of a Foguel operator, Indiana Univ. Math. J. {\bf 58} (2009), no.~5, 2305--2316.

\bibitem{Modelspace2016} S.~R. Garcia, J. Mashreghi and W.~T. Ross, {\it Introduction to model spaces and their operators}, Cambridge Studies in Advanced Mathematics, 148, Cambridge Univ. Press, Cambridge, 2016.

\bibitem{GindlerTaylor} H.~A. Gindler and A.~E. Taylor, The minimum modulus of a linear operator and its use in spectral theory, Studia Math. {\bf 22} (1962), 15--41.

\bibitem{GGK1} I. Gohberg, S. Goldberg and M.~A. Kaashoek, {\it Classes of linear operators. Vol. I}, Operator Theory: Advances and Applications, 49, Birkh\"auser, Basel, 1990.

\bibitem{Groetsch} C.~W. Groetsch, {\it Generalized inverses of linear operators: representation and approximation}, Monographs and Textbooks in Pure and Applied Mathematics, No. 37, Dekker, New York-Basel, 1977.

\bibitem{Halmos} P.~R. Halmos, On Foguel's answer to Nagy's question, Proc. Amer. Math. Soc. {\bf 15} (1964), 791--793.

\bibitem{SHK-GR2020} S.~H. Kulkarni and G. Ramesh, Operators that attain reduced minimum, Indian J. Pure Appl. Math. {\bf 51} (2020), no.~4, 1615--1631.
\bibitem{GR2018}
G. Ramesh, Absolutely norm attaining paranormal operators, J. Math. Anal. Appl. 465 (1) (2018) 547–556.  
\bibitem{Rosenthal} R.~A. Martinez~Avendano and P.~M. Rosenthal, {\it An introduction to operators on the Hardy-Hilbert space}, Graduate Texts in Mathematics, 237, Springer, New York, 2007.

\bibitem{Osaka} M.~S. Moslehian and H. Osaka, {\it Advanced techniques with block matrices of operators}, Frontiers in Mathematics, Birkh\"auser/Springer, Cham, [2024].

\bibitem{AN:Matrix2026} P. Nag and G. Ramesh, On absolutely norm (minimum) attaining $2\times 2$ block operator matrix, J. Math. Anal. Appl. {\bf 563} (2026), no.~1, Paper No. 130742, 20 pp.

\bibitem{Paulsen} S.~K. Pandey and V.~I. Paulsen, A spectral characterization of $\mathcal{AN}$ operators, J. Aust. Math. Soc. {\bf 102} (2017), no.~3, 369--391.

\bibitem{Pedersen} G.~K. Pedersen, {\it Analysis now}, Graduate Texts in Mathematics, 118, Springer, New York, 1989.

\bibitem{Peller82} V.~V. Peller, Estimates of functions of power bounded operators on Hilbert spaces, J. Operator Theory {\bf 7} (1982), no.~2, 341--372.

\bibitem{Peller} V.~V. Peller, {\it Hankel operators and their applications}, Springer Monographs in Mathematics, Springer, New York, 2003.

\bibitem{Pisier} G. Pisier, A polynomially bounded operator on Hilbert space which is not similar to a contraction, J. Amer. Math. Soc. {\bf 10} (1997), no.~2, 351--369.

\bibitem{Raghupathi2012} M. Raghupathi, A note on the norm and spectrum of a Foguel operator, arXiv:1206.6725 (2012).

\bibitem{AN Toeplitz} G. Ramesh and S.~S. Sequeira, Absolutely norm attaining Toeplitz and absolutely minimum attaining Hankel operators, J. Math. Anal. Appl. {\bf 516} (2022), no.~1, Paper No. 126497, 12 pp.

\bibitem{AN Closure} G. Ramesh and S.~S. Sequeira, On the closure of absolutely norm attaining operators, Linear Multilinear Algebra {\bf 71} (2023), no.~18, 2894--2914.

\bibitem{VR} D. Venku~Naidu and G. Ramesh, On absolutely norm attaining operators, Proc. Indian Acad. Sci. Math. Sci. {\bf 129} (2019), no.~4, Paper No. 54, 17 pp.

\bibitem{Yoshino2002} T. Yoshino, The structure of norm-achieved Toeplitz and Hankel operators, Nihonkai Math. J. {\bf 13} (2002), 43--55.

\end{thebibliography}
\end{document}